\documentclass[12pt]{article}

\usepackage[notref,notcite]{showkeys}
\usepackage[colorlinks,linkcolor=blue,citecolor=blue]{hyperref}
\usepackage{amsthm}
\usepackage{multirow}
\usepackage{marginnote}
\usepackage{a4wide}
\usepackage{amssymb}
\usepackage{amsfonts}
\usepackage{amsmath}
\usepackage{mathrsfs}
\usepackage{tikz}
\usetikzlibrary{arrows,matrix}
\usetikzlibrary{positioning}
\usepackage{mdframed} % draw fram of texts
\usepackage{lipsum} % this is used for drawing frams of texts
\usepackage{extarrows} % making long equal sign (or arrows) with text under or above
\usepackage{color,enumerate}
\usepackage{tocloft} % change the style of table of contents

\input xy
\xyoption{arrow} \xyoption{matrix}

\def\S{\mathcal{S}}
\def\Z{\mathbb{Z}}
\def\C{\mathbb{C}}
\def\N{\mathbb{N}}
\def\E{\tilde{E}}
\def\F{\tilde{F}}
\def\cN{\mathcal{N}}

\numberwithin{equation}{section} \theoremstyle{definition}

\date{}

\newtheorem{proposition}{Proposition}[section]
\newtheorem{theorem}[proposition]{Theorem}
\newtheorem{lemma}[proposition]{Lemma}

\newtheorem{corollary}[proposition]{Corollary}

\allowdisplaybreaks
\begin{document}

\author{Yan-an Cai, Yongsheng Cheng, Genqiang Liu}
\title{Simple weight modules over the quantum Schr\"{o}dinger algebra}

\maketitle

\begin{abstract}
In the present paper,  using the technique of localization, we  determine the center of the quantum Schr\"{o}dinger algebra $\S_q$  and classify simple  modules with finite-dimensional weight spaces over  $\S_q$, when $q$ is not a root of unity.
It turns out that there are four classes of such modules: dense $U_q(\mathfrak{sl}_2)$-modules, highest weight modules, lowest weight modules, and twisted modules of highest weight modules.

\end{abstract}

\vskip 10pt \noindent {\em Keywords:} Quantum Schr\"{o}dinger algebra, center, simple weight module, twisting functor
\section{Introduction}

In this paper, we denote by $\mathbb{Z}$, $\mathbb{Z}_+$, $\mathbb{N}$,
$\mathbb{C}$ and $\mathbb{C}^*$ the sets of  all integers, nonnegative integers,
positive integers, complex numbers, and nonzero complex numbers, respectively. Let $q$ be a nonzero complex number which is not a root of unity. For $n,i\in \mathbb{Z}$, denote
$[n]_q=\frac{q^n-q^{-n}}{q-q^{-1}}$, $\binom{n}{i}_q=\frac{[n]_q[n-1]_q\cdots[n-i+1]_q}{[i]_q[i-1]_q\cdots [1]_q}$. For an associative algebra $A$, we use $Z(A)$ to denote its center.

The representations of quantum groups have attracted extensive attention of many mathematicians and physicists. However most of the
research is related to the quantum groups of simple Lie algebras. In the present paper, we study the representations of the quantum group corresponding to a non-semisimple Lie algebra which is called the Schr\"{o}dinger Lie algebra. In the $(1+1)$-dimensional space, the Schr\"{o}dinger Lie algebra $\S$ is the semidirect product of $\mathfrak{sl}_2$ and the three-dimensional Heisenberg Lie algebra.
 It can describe symmetries of the free particle Schr\"{o}dinger equation, see \cite{DD1}. The representation theory of the Schr\"{o}dinger algebra  has been studied by  many authors.
 A classification of the simple
highest weight representations of the Schr\"{o}dinger algebra were given in \cite{DD1}. All simple weight modules with finite dimensional weight spaces were
classified in \cite{D}. The simple weight modules  of conformal Galilei algebra which generalized
Schr\"{o}dinger algebra in $l$-spatial dimension were studied in \cite{LMZ}. In \cite{ZC}, the authors
studied the Whittaker modules over  $\S$, simple
Whittaker modules and related Whittaker vectors were determined. Quasi-Whittaker modules over $\S$ were defined and classified in \cite{CCS}.

In 1996, in order to research the $q$-deformed heat equations,  a $q$-deformation of the universal enveloping algebra of the Schr\"{o}dinger Lie algebra was  introduced by Dobrev et al. , see \cite{DD}.  It is an associative  algebra over $\mathbb{C}$ generated by
$P_t,P_x,G, K_1, D, m$ subject to the following nontrivial relations:
\begin{align}
P_tG-qGP_t&=P_x,  &[P_x,K_1]&=Gq^{-D},  \, & [D,G]&=G,\\
 [D,P_x]&=-P_x,  &  [D,P_t] &=-2P_t,  \,& [D,K_1]&=2K_1,
\\
[P_t,K_1] &= \frac{q^D-q^{-D}}{q-q^{-1}},\, & P_xG-q^{-1} GP_x&=m, \, & P_tP_x-q^{-1}P_xP_t&=0.
\end{align}
If we denote $$K^{\pm 1}=q^{\pm D}, E=P_t, F=-K_1, Y=G, X=P_x, C=-m,$$ and replace $q$ with $q^{-1}$, then they satisfy the following relations:

\begin{align}
KEK^{-1}&=q^2 E,  &KFK^{-1}&=q^{-2}F,  \, & [E,F]&=\frac{K-K^{-1}}{q-q^{-1}}\label{R1},\\
 KXK^{-1} &=qX,  &  KYK^{-1} &=q^{-1}Y,  \,& qYX-XY&=C, \label{R2}
\\
EX &= qXE,\, & EY &= X+ q^{-1}YE, \, & [C,\mathcal{S}_q]&=0,\label{R3}\\
FX&= YK^{-1}+XF,\  &FY&=YF. \label{R4}
\end{align}

 Let $\S_q$ be the associative  algebra  over $\mathbb{C}$ generated by the elements $C, E, F,  K,  K^{-1}, X$ and $ Y$ subject to the defining relations
(\ref{R1})-(\ref{R4}).  We call $\S_q$ the \emph{quantum Schr\"{o}dinger algebra}.

For any $z\in \mathbb{C}$, the quotient algebra $\S_q/(C-z)\S_q$ is a  quantized symplectic oscillator algebras of rank one, see \cite{GK}. In particular, $\overline{\S}_q:=\S_q/C\S_q$ is the smash product of
the quantum plane $\mathbb{C}_q[X,Y]$ and $U_q(\mathfrak{sl}_2)$. We call $\overline{\S}_q$ the \emph{centerless quantum Schr\"{o}dinger algebra}. The subalgebra of $\overline{\S}_q$ generated by $E,K,K^{-1},X$ and $Y$ is the quantum spatial ageing algebra defined in \cite{BL}.

%The subalgebra generated by $C,  K,\,  K^{-1},\, X$ and $ Y$ is denoted by $\mathcal{W}_q$.

An $\S_q$-module $V$  is  called a weight module if $K$ acts diagonally on $V$, i.e.,
$$V=\oplus_{\lambda\in \mathbb{C^*}}V_\lambda,$$ where $V_\lambda=\{v\in V \mid K v=\lambda v\}$.  For $\omega\in \mathbb{C}^*$, denote $V(\omega)=\oplus_{i\in\mathbb{Z}}V_{\omega q^i}$.
If $V$ is simple, then $V= V(\omega)$ for some $\omega$. For a weight module $V$, let $\mathrm{supp}(V)=\{\lambda\in\C^*|V_\lambda\neq0\}$.

The goals of this paper are to determine the centers of $\overline{\S}_q$ and $\S_q$, and to classify all simple weight $\S_q$-modules with finite dimensional weight spaces.

For a simple weight $\S_q$-module $V$ with finite dimensional weight spaces, if $XV=YV=0$, then $M$ is a simple  $U_q(\mathfrak{sl}_2)$-module. All the simple $U_q(\mathfrak{sl}_2)$-modules were classified in \cite{B}.
Since highest (lowest) weight modules have been classified in \cite{DD}, it remains to classify
 those simple weight modules on which either $X$ or $Y$ or both act nonzero and, furthermore, which have neither
a highest nor a lowest weight. We denote the class of such modules by $\cN$.

The paper is organized as follows. In section 2, we will determine the center for the algebras $\overline{\S}_q$ and $\S_q$. In
section 3, some basic results for our discussions on weight modules will be given.  We will give details on twisting functors in section 4. Finally, in
section 5, we classify simple weight modules in $\cN$.

\section{The center of the algebras $\overline{\S}_q$ and $\S_q$}

In this section, we will determine the center for the algebra $\overline{\S}_q$ and $\S_q$. Indeed, we will prove the following theorem.
\begin{theorem}\label{center}
\begin{enumerate}[(i)]
\item The center of the centerless quantum Schr\"{o}dinger algebra $\overline{\S}_q$ is trivial.
\item The center of the quantum Schr\"{o}dinger algebra $\S_q$ is  $Z(\S_q)=\C[C]$.
\end{enumerate}
\end{theorem}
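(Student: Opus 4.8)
The plan is to set up a PBW-type basis for $\S_q$ (and $\overline{\S}_q$) and then analyze which elements commute with the generators by tracking the $K$-weight and a suitable filtration degree. First I would observe that, from the defining relations (\ref{R1})--(\ref{R4}), the algebra $\S_q$ has a PBW basis $\{F^a Y^b C^c K^d E^e X^f : a,b,e,f\in\Z_+,\ c\in\Z_+,\ d\in\Z\}$ (up to a fixed choice of ordering of the generators); this is the standard fact that $\S_q/(C-z)\S_q$ is a quantized symplectic oscillator algebra of rank one, and $C$ is a nonzerodivisor, so $\S_q\cong \overline{\S}_q[C]$ as a vector space (indeed the defining relations show $C$ is central, so $\C[C]\subseteq Z(\S_q)$ is automatic, and the content is the reverse inclusion). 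It then suffices to prove part (i), because a central element $z\in Z(\S_q)$, written in the PBW basis, would have each ``$C$-coefficient'' central in $\overline{\S}_q$; by (i) these coefficients are scalars, so $z\in\C[C]$.

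For part (i), let $z\in Z(\overline{\S}_q)$. Since $z$ commutes with $K$, $z$ must be a sum of PBW monomials of $K$-weight $0$; writing $K$-degree as $2(e-a)+(f-b)$ from (\ref{R1})--(\ref{R2}), only weight-zero combinations survive, so $z\in\overline{\S}_q{}_0$, the zero weight space. The key step is then to use commutation with $E$, $X$, $F$, $Y$ to force $z$ to be a polynomial in $K^{\pm1}$ alone and then a scalar. I would introduce the filtration by total degree in $E,F,X,Y$ and look at the top-degree part $\bar z$ of $z$ in the associated graded algebra, which is (a localization at $K$ of) a $q$-commutative polynomial ring; in such an Ore-type domain the centralizer of $E$ and $X$ (which act on monomials $F^aY^bK^dE^eX^f$ by shifting $K$-powers and are $q$-skew with $F,Y$) is small, and imposing also $[z,F]=[z,Y]=0$ kills all of $E,F,X,Y$, leaving $\bar z\in\C[K^{\pm1}]$; since $[K,\cdot]$ already forced weight zero, $\bar z$ is a scalar, and then $z-\bar z$ has strictly smaller degree, so by induction $z$ is a scalar. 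Concretely, $[z,E]=0$ and $[z,X]=0$ each give a recursion on the coefficients of $z$ (viewed as a Laurent polynomial in $K$ with coefficients in the subalgebra generated by $E,F,X,Y$), and because $q$ is not a root of unity the only solutions are constants.

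The main obstacle I anticipate is bookkeeping: the relation $EY=X+q^{-1}YE$ and $FX=YK^{-1}+XF$ are \emph{inhomogeneous}, so the naive degree filtration is only a filtration (not a grading) and one must be careful that the inhomogeneous terms do not spoil the top-degree argument — they don't, since $X$ has degree $1$ and $YK^{-1}$ has degree $1$, matching $EY$ and $FX$, so the associated graded algebra is exactly the expected $q$-polynomial ring (the $q$-Weyl-type algebra $\C_q[X,Y]$ smashed with the quantum torus), and the top-degree computation goes through cleanly. A secondary subtlety is that $\overline{\S}_q$ is not $\Z$-graded by total degree but only filtered, so the induction must be phrased in terms of the filtration; alternatively one can pass to $\mathrm{gr}\,\overline{\S}_q$, compute its center there, and lift. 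Either way the root-of-unity hypothesis enters exactly when solving the coefficient recursions from $[z,E]=[z,X]=0$, guaranteeing that powers of $q$ appearing as eigenvalues are never $1$ except in the trivial case.
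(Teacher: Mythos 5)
Your argument for part (i) breaks down at the associated-graded step: the center of $\mathrm{gr}\,\overline{\S}_q$ (filtration by total degree in $E,F,X,Y$) is \emph{not} trivial, so the top symbol of a would-be central element need not lie in $\C[K^{\pm1}]$. In $\mathrm{gr}\,\overline{\S}_q$ the inhomogeneous relations degenerate to $\bar E\bar Y=q^{-1}\bar Y\bar E$, $\bar F\bar X=\bar X\bar F$, $\bar X\bar Y=q\bar Y\bar X$, $\bar E\bar F=\bar F\bar E$, with the $K$-conjugations unchanged; a direct check shows that $\bar X\bar Y\bar E\bar F$ then commutes with $\bar X,\bar Y,\bar E,\bar F$ and $K^{\pm1}$, so $\C[\bar X\bar Y\bar E\bar F]$ lies in $Z(\mathrm{gr}\,\overline{\S}_q)$. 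Hence your claim that imposing $[z,E]=[z,X]=[z,F]=[z,Y]=0$ forces $\bar z\in\C[K^{\pm1}]$ is false, and the induction on degree stalls: it cannot exclude central elements whose leading term is a power of $XYEF$. What is actually needed is a \emph{lower-order} obstruction, invisible in $\mathrm{gr}$, and this is exactly what the paper's proof supplies. It introduces $\E=EY-qYE$ and $\F=FX-q^{-2}XF$, whose leading terms are (up to scalars) $YE$ and $XF$, proves that $\E,\F$ $q$-commute cleanly with $X,Y,K$, passes to the localization $\overline{\S}_q^{(X,Y)}$ where $\{X^aY^bK^c\E^d\F^e\}$ is a PBW basis, and then computes $\E\F-\F\E=(q^{-2}-1)XYK\neq0$. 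That nonzero degree-$2$ correction is precisely what collapses the candidate central elements $\sum r(a,b)(XYK)^a\E^b\F^b$ to scalars; without a computation of this kind the graded argument alone cannot close.

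Your reduction of (ii) to (i) is the right idea but slightly glossed: the span $B$ of the monomials $X^aY^bK^cE^dF^e$ is not a subalgebra (e.g.\ $qYX-XY=C\notin B$), so $\S_q\cong\overline{\S}_q[C]$ only as a vector space, and it is not immediate that each ``$C$-coefficient'' of a central $z$ is central in $\overline{\S}_q$. One fix is induction on the $C$-degree: the image of $z$ in $\overline{\S}_q=\S_q/C\S_q$ is central, hence a scalar by (i), so $z$ minus that scalar is divisible by the nonzerodivisor $C$, and one iterates. The paper instead specializes: if some coefficient polynomial $r(a,b,c,d,e,C)$ with $a+b+c+d+e>0$ is nonzero, pick $z_0\in\C$ with $r(a,b,c,d,e,z_0)\neq0$ and observe that the image of $z$ in $\S_q/(C-z_0)\S_q$ is a non-scalar central element, contradicting triviality of that center (part (i) for $z_0=0$, Gan--Khare for $z_0\neq0$).
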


Note that this is not so similar with the center of the enveloping algebra of the (centerless) Schr\"{o}dinger algebra, see \cite{DLMZ}.

In \cite{GK}, using the action of the center  on simple highest weight modules, Gan and Khare showed that for any nonzero complex number $z$, the  center of $\S_q/(C-z)\S_q$ is trivial.
However, their method is not applicable to the case that $z=0$. When $C$ acts trivially on a simple highest weight module $M$, we must have that  both $X$ and $Y$ act trivially on $M$, see Proposition 3.10 in
\cite{GK}.

%Consider the localization $\S_q^{(C)}$, then we have $Z(\S_q)=Z(\S_q^{(C)})\cap\S_q$. However, $\S_q^{(C)}\cong\C[C^{\pm1}]\S'_q$, where $\S'_q$ is the quantum Schr\"{o}dinger algebra with center $C=1$, so following from Theorem 11.1 in \cite{GK}, we have $Z(\S_q^{(C)})=\C[C^{\pm1}]$. Therefore, we have proved the first statement of Theorem \ref{center}.
%
%
%Let $\E=EY-qYE=X+(q^{-1}-q)YE$ and $\F=FX-q^{-2}XF=YK^{-1}+(1-q^{-2})XF$, then we have the following equalities which can be proved by direct computation.
\

Before proving Theorem \ref{center}, we will give the following useful formulas in the centerless quantum Schr\"{o}dinger algebra.
\begin{lemma}
The following equalities hold in the centerless quantum Schr\"{o}dinger algebra.
\begin{align*}
&\E X=X\E,\,\, \E Y=q^{-1}Y\E,\,\, \E K=q^{-1}K\E,\\
&\F X=X\F,\,\, \F Y=q Y\F,\,\, \F K=qK\F,\\
&\E\F^i=\F^i\E+(q^{-2i}-1)\F^{i-1}XYK,\\
&\F\E^i=\E^i\F+q^{-2}(q^{2i}-1)\E^{i-1}XYK,
\end{align*}where $\E=EY-qYE=X+(q^{-1}-q)YE$, $\F=FX-q^{-2}XF=YK^{-1}+(1-q^{-2})XF$ and $i\in \mathbb{Z}_+$.
\end{lemma}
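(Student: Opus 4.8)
The plan is to establish the commutation formulas by direct computation in $\overline{\S}_q$, bootstrapping from the defining relations \eqref{R1}--\eqref{R4}. First I would record the two alternative expressions for $\E$ and $\F$. From $\E = EY - qYE$ and the relation $EY = X + q^{-1}YE$ in \eqref{R3}, I get $\E = X + q^{-1}YE - qYE = X + (q^{-1}-q)YE$, as claimed; similarly from $\F = FX - q^{-2}XF$ and $FX = YK^{-1} + XF$ in \eqref{R4} I get $\F = YK^{-1} + XF - q^{-2}XF = YK^{-1} + (1-q^{-2})XF$. Having both forms is useful because the ``$\E$'' form is convenient for pushing $E$ past things and the ``$X + \dots$'' form is convenient for weight bookkeeping.

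For the first two lines (the rank-one commutators $\E X = X\E$, $\E Y = q^{-1}Y\E$, $\E K = q^{-1}K\E$ and their $\F$-counterparts), I would use the $X+(q^{-1}-q)YE$ form of $\E$. For instance, to get $\E K = q^{-1}K\E$, note $XK = q^{-1}KX$ (from $KXK^{-1}=qX$) and $YEK = q^{-1}KYEK^{-1}\cdot K \cdot$... more carefully: $KY = q^{-1}YK$ and $KE = q^{2}EK$ give $K(YE) = q^{-1}YKE = q^{-1}\cdot q^{2}YEK = q(YE)K$, so $\E K = XK + (q^{-1}-q)YEK = q^{-1}KX + (q^{-1}-q)q^{-1}KYE\cdot$ — one must be careful with which side $K$ ends up on; the cleanest route is to compute $K\E K^{-1}$ and check it equals $q^{-1}\E$. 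Since $KXK^{-1}=q^{-1}X$... wait, $KXK^{-1}=qX$, so $K^{-1}XK = q^{-1}X$; either way $\E$ is a sum of a weight-$q^{-1}$ term ($X$ has $K$-weight... $KX=qXK$ means $X$ raises weight by $q$) — I need to track signs carefully, but the point is each of $\E,\F$ is a $K$-eigenvector under conjugation, and likewise an $X$- and $Y$-``eigenvector'' in the sense stated, and these all follow from a handful of two-line commutator manipulations using only \eqref{R1}--\eqref{R4}. I would also use $\E Y = q^{-1}Y\E$ together with $qYX - XY = 0$ in $\overline{\S}_q$ (since $C=0$) to handle the $Y$ and $X$ interactions.

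For the last two displayed identities, $\E\F^i = \F^i\E + (q^{-2i}-1)\F^{i-1}XYK$ and $\F\E^i = \E^i\F + q^{-2}(q^{2i}-1)\E^{i-1}XYK$, I would first establish the $i=1$ base cases, $[\E,\F] = (q^{-2}-1)XYK$ and $[\F,\E] = q^{-2}(q^{2}-1)XYK = -(q^{-2}-1)\cdot$(something) — consistency of the two $i=1$ cases is a good sanity check, since $[\F,\E] = -[\E,\F]$ forces $q^{-2}(q^{2}-1) = -(q^{-2}-1)$, which indeed holds. The base case $[\E,\F] = (q^{-2}-1)XYK$ is computed directly: write $\E = EY - qYE$, $\F = FX - q^{-2}XF$, expand the commutator using $[E,F] = (K-K^{-1})/(q-q^{-1})$ from \eqref{R1}, $FY=YF$ from \eqref{R4}, $EX=qXE$ from \eqref{R3}, $FX = YK^{-1}+XF$ from \eqref{R4}, and $EY = X+q^{-1}YE$ from \eqref{R3}, plus the various $K$-relations and $qYX=XY$. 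This is the computational heart. Then the general $i$ follows by induction: assuming the formula for $i$, write $\E\F^{i+1} = (\E\F)\F^{i} = (\F\E + (q^{-2}-1)XYK)\F^{i}$, push $\E$ through $\F^i$ using the inductive hypothesis, and push $XYK$ through $\F^i$ using the first-line/second-line eigenvector relations (each $\F$ multiplies $XYK$ by a fixed power of $q$: $\F X = X\F$, $\F Y = qY\F$, $\F K = qK\F$, so $\F (XYK) = q^{2}(XYK)\F$, hence $XYK\,\F^i = q^{-2i}\F^i\,XYK$). Collecting the resulting geometric sum gives $(q^{-2i-2}-1)$ as required; the $\F\E^i$ identity is entirely parallel with $q \leftrightarrow q^{-1}$-type bookkeeping.

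The main obstacle is purely the bookkeeping in the $i=1$ commutator $[\E,\F]$: one must expand a product of two two-term expressions and repeatedly normal-order using six or seven different relations, with several $q$-powers to track, and hope the many terms collapse to the single monomial $XYK$ up to a scalar. Everything else — the six eigenvector relations and the induction — is routine once that computation is in hand, and the two $i=1$ cases cross-check each other via antisymmetry of the commutator, which I would use to catch arithmetic slips.
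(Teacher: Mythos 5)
Your plan is correct and follows essentially the same strategy as the paper: verify the six quasi-commutation relations by direct computation with the form $\E = X + (q^{-1}-q)YE$, $\F = YK^{-1}+(1-q^{-2})XF$; establish the $i=1$ base case; then induct, transporting $XYK$ past powers of $\F$ (or $\E$) via the quasi-commutation relations. The one genuine organizational difference is in the base case: you propose expanding the raw commutator of the two-term expressions $\E = EY - qYE$ and $\F = FX - q^{-2}XF$ (sixteen monomials to normal-order), whereas the paper first derives the intermediate identity $E\F = q\F E + q^{-1}KX$ and then computes $\E\F = X\F + (q^{-1}-q)YE\F$, substituting the intermediate; this collapses the bookkeeping considerably. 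Your cross-check via antisymmetry of $[\E,\F]$ is a nice addition the paper doesn't mention. Two small corrections: the conjugation relation is $K\E K^{-1}=q\E$, not $q^{-1}\E$ (since $\E K=q^{-1}K\E$); and the induction step produces just two terms that telescope, $(q^{-2i}-1)+(q^{-2i-2}-q^{-2i})=q^{-2i-2}-1$, rather than a geometric sum.
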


\begin{proof}
Following the defining relations, we have
\begin{align*}
\E X&=(X+(q^{-1}-q)YE)X=X^2+(q^{-1}-q)YEX\\
&=X^2+(q^{-1}-q)qYXE=X^2+(q^{-1}-q)XYE\\
&=X(X+(q^{-1}-q)YE)=X\E,\\
\\
\E Y&=(X+(q^{-1}-q)YE)Y=XY+(q^{-1}-q)YEY\\
&=qYX+(q^{-1}-q)Y(X+q^{-1}YE)=q^{-1}Y(X+(q^{-1}-q)YE)\\
&=q^{-1}Y\E,\\
\\
\E K&=(X+(q^{-1}-q)YE)K=XK+(q^{-1}-q)YEK\\
&=q^{-1}KX+(q^{-1}-q)q^{-2}YKE=q^{-1}K(X+(q^{-1}-q)YE)\\
&=q^{-1}K\E,\\
\\
\F X&=(YK^{-1}+(1-q^{-2})XF)X=YK^{-1}X+(1-q^{-2})XFX\\
&=q^{-1}YXK^{-1}+(1-q^{-2})X(YK^{-1}+XF)=X(YK^{-1}+(1-q^{-2})XF)\\
&=X\F,\\
\\
\F Y&=(YK^{-1}+(1-q^{-2})XF)Y=YK^{-1}Y+(1-q^{-2})XFY\\
&=qY^2K^{-1}+(1-q^{-2})XFY=qY(YK^{-1}+(1-q^{-2})XF)\\
&=qY\F,\\
\\
\F K&=(YK^{-1}+(1-q^{-2})XF)K=Y+(1-q^{-2})q^2XKF\\
&=qKYK^{-1}+q(1-q^{-2})KXF=qK(YK^{-1}+(1-q^{-2})XF)\\
&=qK\F.
\end{align*}

We use induction on $i$ to prove the last two equalities. First, we have
\begin{align*}
E\F&=E(FX-q^{-2}XF)\\
&=(FE+\frac{K-K^{-1}}{q-q^{-1}})X-q^{-1}XEF\\
&=qFXE+\frac{KX-K^{-1}X}{q-q^{-1}}-q^{-1}X(FE+\frac{K-K^{-1}}{q-q^{-1}})\\
&=q(FX-q^{-2}XF)E+\frac{KX-K^{-1}X}{q-q^{-1}}-\frac{q^{-1}XK-q^{-1}XK^{-1}}{q-q^{-1}}\\
&=q\F E+\frac{1-q^{-2}}{q-q^{-1}}KX\\
&=q\F E+q^{-1}KX.
\end{align*}
Hence, we have
\begin{align*}
\E\F&=(X+(q^{-1}-q)YE)\F=X\F+(q^{-1}-q)YE\F\\
&=\F X+(q^{-1}-q)Y(q\F E+q^{-1}KX)\\
&=\F X+(q^{-1}-q)qY\F E+(q^{-2}-1)YKX\\
&=\F X+(q^{-1}-q)\F YE+(q^{-2}-1)XYK\\
&=\F\E+(q^{-2}-1)XYK.
\end{align*}
This means the last two equalities hold for $i=1$. Suppose they are true for $i$, then
\begin{align*}
\E\F^{i+1}&=\E\F^i\F=\F^i\E\F+(q^{-2i}-1)\F^{i-1}XYK\F\\
&=\F^i(\F\E+(q^{-2}-1)XYK)+(q^{-2i}-1)q^{-2}\F^iXYK\\
&=\F^{i+1}\E+(q^{-2(i+1)}-1)\F^iXYK,\\
\\
\F\E^{i+1}&=\F\E^i\E=\E^i\F\E+q^{-2}(q^{2i}-1)\E^{i-1}XYK\E\\
&=\E^i(\E\F+(1-q^{-2})XYK)+(q^{2i}-1)\E^iXYK\\
&=\E^{i+1}\F+q^{-2}(q^{2(i+1)}-1)\E^iXYK.\tag*{\qedhere}
\end{align*}
\end{proof}

We will use localization to determine the center of the centerless quantum Schr\"{o}dinger algebra. Since we have
\begin{align*}
EY^i&=q^{-i}Y^iE+[i]_qY^{i-1}X-\frac{q+q^2-q^{2-i}-q^{i+1}}{(1-q^2)(q-1)}CY^{i-2},\\
XY^i&=q^iY^iX-\frac{q^i-1}{q-1}CY^{i-1},
\end{align*}
the set $\{Y^i|i\in\Z_+\}$ is a left and right Ore subset of $\overline{\S}_q$. Similarly, for any $s\in\{E,F,X,Y,C\}$, the set $\{s^i|i\in\Z_+\}$ is a left and right Ore subset of $\overline{\S}_q$. Hence, we can consider the corresponding localization $\overline{\S}_q^{(s)}$. For $\overline{\S}_q^{(X,Y)}$ we have the following analogue to the Poincar\'{e}-Birkhoff-Witt theorem.
\begin{lemma}
The set $\{X^aY^bK^c\E^d\F^e|(a,b,c,d,e)\in\Z\times\Z\times\Z\times\Z_+\times\Z_+\}$ is a basis for $\overline{\S}_q^{(X,Y)}$.
\end{lemma}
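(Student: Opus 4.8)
The plan is to exhibit a spanning set of the claimed form and then prove linear independence, exactly as one proves a PBW-type theorem after localization. First I would set up the relevant subalgebra/localization structure: the subalgebra generated by $X^{\pm1},Y^{\pm1},K^{\pm1}$ inside $\overline{\S}_q^{(X,Y)}$ is a localized quantum torus (the commutation relations $qYX-XY=C$ become, modulo $C$, simply $qYX=XY$, so $X,Y$ $q$-commute), hence has $\{X^aY^bK^c\mid (a,b,c)\in\Z^3\}$ as a basis. Then I would show that $\overline{\S}_q^{(X,Y)}$ is spanned over this torus by the monomials $\E^d\F^e$ with $d,e\in\Z_+$, where $\E,\F$ are the elements introduced in the previous lemma. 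For this it suffices to check that the set of linear combinations of $X^aY^bK^c\E^d\F^e$ is closed under left multiplication by each of the generators $E,F,X^{\pm1},Y^{\pm1},K^{\pm1},C$. Using $E=q(q^{-1}-q)^{-1}(Y^{-1}\E-X Y^{-1})$ and similarly $F=(1-q^{-2})^{-1}(X^{-1}\F-X^{-1}YK^{-1})$ (both valid in the localization), it is enough to handle multiplication by $\E,\F,X^{\pm1},Y^{\pm1},K^{\pm1}$. The commutation relations from the preceding lemma — $\E X=X\E$, $\E Y=q^{-1}Y\E$, $\E K=q^{-1}K\E$, $\F X=X\F$, $\F Y=qY\F$, $\F K=qK\F$, and crucially $\E\F^i=\F^i\E+(q^{-2i}-1)\F^{i-1}XYK$ — let one move any $\E$ or $\F$ past any torus element (picking up scalars) and straighten $\E\F^e$ products (the correction term lands in lower $\F$-degree, times $XYK$ which is absorbed into the torus part). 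This gives the spanning statement.

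For linear independence I would produce a faithful module on which these monomials act by visibly independent operators. The natural candidate is a localized Verma-type module: take the polynomial (or Laurent) module $\C[X^{\pm1},Y^{\pm1},K^{\pm1}]$ with the obvious left-multiplication action of the torus, let $\F$ act as left multiplication (so $\F^e$ shifts a formal variable), and let $\E$ act so that $\E\F^e\cdot 1$ are linearly independent — concretely, realize $\overline{\S}_q^{(X,Y)}$ acting on $\overline{\S}_q^{(X,Y,\E,\F)}$ or on an induced module from the torus, and read off that $X^aY^bK^c\E^d\F^e\cdot 1$ have distinct "leading terms" graded by $(d,e)$ and then by $(a,b,c)$. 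Alternatively, and perhaps more cleanly, I would invoke a degree/filtration argument: $\overline{\S}_q$ carries a filtration whose associated graded is (a localization of) a $q$-polynomial algebra in the relevant variables, for which the PBW basis is standard, and localization is exact, so the images of our monomials remain a basis.

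The main obstacle is linear independence: the spanning part is a finite bookkeeping exercise with the commutation relations already established, but showing no nontrivial relation survives requires either constructing an honest faithful representation with a manifestly triangular action of $\E,\F$ on the torus, or carefully checking that the relevant associated-graded algebra really is an (Ore-localized) quantum affine space of the right dimension so that its known PBW basis descends. I would aim for the faithful-module route, since the candidate module — induce the one-dimensional(ish) torus module up along $\E,\F$, i.e. the module with basis $\{X^aY^bK^c\F^e\}$ on which $\E$ acts via the straightening relation $\E\F^e = \F^e\E + (q^{-2e}-1)\F^{e-1}XYK$ with $\E\cdot(\text{torus part})$ determined by $\E X=X\E$, $\E Y=q^{-1}Y\E$, $\E K=q^{-1}K\E$ and $\E\cdot1=0$ — makes the $(d,e)$-grading transparent and forces any putative relation to vanish degree by degree. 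Verifying that this assignment genuinely defines an $\overline{\S}_q^{(X,Y)}$-module (i.e. respects all defining relations) is the one calculation I would actually carry out in full.
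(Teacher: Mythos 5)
Your spanning argument is essentially the paper's: exchange $E,F$ for $\E,\F$ using the defining formulas, straighten with the commutation relations of the preceding lemma, and observe closure under the generators. For linear independence you hedge between two routes — a faithful-module construction (which you explicitly defer and do not verify) and a degree/filtration argument (mentioned only as an alternative). The paper commits to the latter, and its execution is lighter than your sketch suggests: from $\E = X + (q^{-1}-q)YE$ and $\F = YK^{-1}+(1-q^{-2})XF$ one gets
\[
X^aY^bK^c\E^d\F^e = \alpha\, X^{a+e}Y^{b+d}K^cE^dF^e + \sum_{\substack{d'\le d,\ e'\le e\\ d'+e'<d+e}} c_{d',e'}(X,Y,K)\,E^{d'}F^{e'},
\]
where $\alpha$ is a nonzero scalar. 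Since $(a,b,c,d,e)\mapsto(a+e,b+d,c,d,e)$ is a bijection on $\Z^3\times\Z_+^2$, any nontrivial relation among the left-hand sides would force, at the top total $(E,F)$-degree, a nontrivial relation among the monomials $X^{a'}Y^{b'}K^{c'}E^dF^e$, contradicting the localized PBW basis of $\overline{\S}_q^{(X,Y)}$ obtained directly from the PBW theorem for $\overline{\S}_q$ by inverting $X$ and $Y$. Your faithful-module route would also work if completed, but it is unnecessary machinery here: the very triangularity you already invoke for spanning gives linear independence for free, and carrying out the deferred module verification would be more work than the lemma deserves. So the one concrete shortfall in your write-up is precisely that postponed verification; replace the module by the displayed triangularity and the argument closes, coinciding in substance with the paper's.
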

\begin{proof}
By Poincar\'{e}-Birkhoff-Witt theorem, we know that $\{X^aY^bK^cE^dF^e|(a,b,c,d,e)\in\Z_+\times\Z_+\times\Z\times\Z_+\times\Z_+\}$ is a basis for $\overline{\S}_q$. By the definition of localization, $\{X^aY^bK^cE^dF^e|\\(a,b,c,d,e)\in\Z\times\Z\times\Z\times\Z_+\times\Z_+\}$ is a basis for $\overline{\S}_q^{(X,Y)}$, and hence $\{X^aY^bK^c\E^d\F^e|(a,b,c,d,e)\\\in\Z\times\Z\times\Z\times\Z_+\times\Z_+\}$ spans  $\overline{\S}_q^{(X,Y)}$. So it remains to show this is a linearly independent set.  Since
\[
X^aY^bK^c\E^d\F^e=cX^{a+e}Y^{b+d}K^cE^dF^e+\sum\limits_{\substack{d'\leq d, e'\leq e\\ d'+e'<d+e}}c_{d',e'}(X,Y,K)E^{d'}F^{e'},
\]
where $c\neq0$ and $c_{d',e'}(X,Y,K)$ are polynomials in $X,Y,K$, the independence of the set $\{X^aY^bK^c\E^d\F^e|(a,b,c,d,e)\in\Z\times\Z\times\Z\times\Z_+\times\Z_+\}$ follows from the independence of  $\{X^aY^bK^cE^dF^e|(a,b,c,d,e)\in\Z\times\Z\times\Z\times\Z_+\times\Z_+\}$.
\end{proof}

Now we are ready to prove Theorem \ref{center}.

\

{\it Proof of Theorem \ref{center}.}(i)
Consider the localization $\overline{\S}_q^{(X,Y)}$, then we have $Z(\overline{\S}_q)=Z(\overline{\S}_q^{(X,Y)})\cap \overline{\S}_q$ %and $\{X^aY^bK^c\E^d\F^e|(a,b,c,d,e)\in\Z\times\Z\times\Z\times\Z_+\times\Z_+\}$ is a basis for $\overline{\S}_q^{(X,Y)}$. 
Let $Z=\sum r(a,b,c,d,e)X^aY^bK^c\E^d\F^e$ be any nonzero element in $Z(\overline{\S}_q^{(X,Y)})$.

Since
\begin{align*}
0&=KZ-ZK\\
&=\sum rKX^aY^bK^c\E^d\F^e-\sum rX^aY^bK^c\E^d\F^eK\\
&=\sum q^{a-b}rX^aY^bK^{c+1}\E^d\F^e-\sum q^{e-d}rX^aY^bK^{c+1}\E^d\F^e,
\end{align*}
we have for each $(a,b,c,d,e)\in\Z^3\times\Z_+^2$,
\[
r(a,b,c,d,e)(q^{a-b}-q^{e-d})=0.
\]
Therefore, $r(a,b,c,d,e)=0$ unless $e=a-b+d$. So $Z=\sum r(a,b,c,d)X^aY^bK^c\E^d\F^{a-b+d}$.

From
\begin{align*}
0&=XZ-ZX\\
&=\sum rX^{a+1}Y^bK^c\E^d\F^{a-b+d}-\sum rX^aY^bK^c\E^d\F^{a-b+d}X\\
&=\sum rX^{a+1}Y^bK^c\E^d\F^{a-b+d}-\sum q^{c-b}rX^{a+1}Y^bK^c\E^d\F^{a-b+d},
\end{align*}
we know that
\[
r(a,b,c,d)(q^{c-b}-1)=0.
\]
Hence, $r(a,b,c,d)=0$ unless $b=c$, which means that $Z=\sum r(a,b,c)X^aY^bK^b\E^c\F^{a-b+c}$.

From
\begin{align*}
0&=YZ-ZY\\
&=\sum (q^{-a}-q^{a-2b})r(a,b,c)X^aY^{b+1}K^b\E^c\F^{a-b+c},
\end{align*}
 we have $r(a,b,c)=0$ unless $a=b$. So $$Z=\sum r(a,b)(XYK)^a\E^b\F^b=\sum\limits_{a=s}^{t}\sum\limits_{b=0}^nr(a,b)(XYK)^a\E^b\F^b.$$

Following from
\begin{align*}
0=&Z\E-\E Z\\
=&\sum\limits_{a=s}^t\sum\limits_{b=0}^nr(a,b)(XYK)^a\E^b\F^b\E-\sum\limits_{a=s}^t\sum\limits_{b=0}^nr(a,b)\E(XYK)^a\E^b\F^b\\
=&\sum\limits_{a=s}^t\sum\limits_{b=0}^nr(a,b)(XYK)^a\E^b(\E\F^b+(1-q^{-2b})\F^{b-1}(XYK))\\
&-\sum\limits_{a=s}^t\sum\limits_{b=0}^nq^{-2a}r(a,b)(XYK)^a\E^{b+1}\F^b\\
=&\sum\limits_{a=s}^t\sum\limits_{b=0}^nr(a,b)(1-q^{-2a})(XYK)^a\E^{b+1}\F^b\\
&+\sum\limits_{a=s}^t\sum\limits_{b=0}^nr(a,b)q^{2(b-1)}(1-q^{-2b})(XYK)^{a+1}\E^b\F^{b-1},
\end{align*}
we deduce that
\begin{align*}
r(t,n)(1-q^{-2t})&=0,\\
r(t,n)q^{2(n-1)}(1-q^{-2n})&=0.
\end{align*}
Thus, we have $t=n=0$, that is $Z=\sum\limits_{a\leq 0}r(a)(XYK)^a$. So, the first statement of Theorem \ref{center} follows.

(ii) By (i) and Theorem 11.1 in \cite{GK}, for any  complex number $z$, the  center of $\S_q/(C-z)\S_q$ is trivial.  Suppose that $Z=\sum\limits_{(a,b,c,d,e)\in\Z_+^5} r(a,b,c,d,e,C)X^aY^bK^cE^dF^e$ is an element of the center of $\S_q$, where each coefficient $r(a,b,c,d,e,C)$ is a polynomial in $C$.
Suppose that there is some $(a,b,c,d,e)\in\Z_+^5$ with $a+b+c+d+e>0$ such  that the corresponding coefficient  $r(a,b,c,d,e,C)$  is not zero. Choose $z\in \mathbb{C}$ such that  $r(a,b,c,d,e,z)\neq 0$.
Then  the image of $Z$ in $\S_q/(C-z)\S_q$ is not a scalar for any $z\in Z$, which is impossible.
\qed

\section{Some basic results}

In this section, we will give some basic results for our arguments on weight modules.

A classification and explicit description of all simple highest weight $\S_q$-modules was given by Dobrev et al. in \cite{DD,GK}. Using the involution given in \cite{DD},
we can also obtain explicit description of simple lowest weight $\S_q$-modules. Here we recall these results  which are necessary for our arguments.

\begin{theorem}\label{simplehw}
Let $V$ be a simple highest weight $\S_q$-module with central charge $z\in\C$.
\begin{enumerate}[(i)]
\item If $z=0$, then $V$ is a simple highest weight $U_q(\mathfrak{sl}_2)$-module, that is $XV=YV=0$.
\item For $\lambda\in\C^*$ and $z\in\C^*$, let $M(\lambda,z)$ be the Verma module generated by $v_{0,0}$, where $Kv_{0.0}=\lambda v_{0,0}, Cv_{0.0}=zv_{0,0}$. Then  $M(\lambda,z)$ has the basis $\{v_{k,l}:=Y^kF^lv_{0,0}|k,l\in\Z_+\}$ on which the $\S_q$-action is given by
\begin{align*}
K.v_{k,l}&=\lambda q^{-k-2l}v_{k,l},\ \ \  C.v_{k,l}=zv_{k,l},\\
Y.v_{k,l}&=v_{k+1,l}, \ \ \ \ \ F.v_{k,l}=v_{k,l+1},\\
X.v_{k,l}&=-z\frac{q^k-1}{q-1}v_{k-1,l}-\lambda^{-1}q^{k+l-1}[l]_qv_{k+1,l-1},\\
E.v_{k,l}&=\frac{\lambda q^{1-k-l}-\lambda^{-1}q^{k+l-1}}{q-q^{-1}}[l]_qv_{k,l-1} -\frac{q+q^2-q^{2-k}-q^{k+1}}{(1-q^2)(q-1)}zv_{k-2,l}.
\end{align*} The module $M(\lambda,z)$ is simple if $\lambda^2\in\C^*\setminus q^{-3+2\N}$.
For $\lambda$ with $\lambda^2=q^{2d-3}\in q^{-3+2\N}$ and $z\in\C^*$, denote by $N(\lambda,z)$ the unique simple quotient of $M(\lambda,z)$ with basis $\{v_{k,l}\mid k,l\in\Z_+,l\leq d-1\}$, on which the $\S_q$-action is given by
\begin{align*}
K.v_{k,l}&=\lambda q^{-k-2l}v_{k,l},\,\ \ C.v_{k,l}=zv_{k,l},\, \ \ Y.v_{k,l}=v_{k+1,l}, \\
F.v_{k,l}&=\left\{\begin{array}{ll}v_{k,l+1}, & l<d-1,\\
-\sum\limits_{i=0}^{d-1}\Big(\frac{-q^{d-1}}{\lambda z(q+1)}\Big)^{d-i}\binom{d}{i}_qv_{k+2d-2i,i}, & l=d-1,
\end{array}\right.\\
X.v_{k,l}&=-z\frac{q^k-1}{q-1}v_{k-1,l}-\lambda^{-1}q^{k+l-1}[l]_qv_{k+1,l-1},\\
E.v_{k,l}&=\frac{\lambda q^{1-k-l}-\lambda^{-1}q^{k+l-1}}{q-q^{-1}}[l]_qv_{k,l-1} -\frac{q+q^2-q^{2-k}-q^{k+1}}{(1-q^2)(q-1)}zv_{k-2,l}.
\end{align*}
\end{enumerate}
If the central charge of $V$ is nonzero, then $V$ is isomorphic to either some $M(\lambda,z)$ or $N(\lambda,z)$.
\end{theorem}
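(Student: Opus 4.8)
The strategy is to realise $M(\lambda,z)$ as an induced module, to locate its unique maximal submodule from the action of the raising operators $E,X$ on weight vectors, and then to deduce the full classification using the standard fact that every simple highest weight module is a simple quotient of a Verma module. For the construction: since $KEK^{-1}=q^2E$ and $KXK^{-1}=qX$, the elements $E,X$ raise weights while $Y,F$ lower them and $C$ is central, and by the PBW theorem $\S_q$ is free as a right module over its subalgebra $\S_q^{\geq}$ generated by $E,X,K^{\pm1},C$, with basis the commutative algebra $\C[Y,F]$ (recall $FY=YF$). Hence $M(\lambda,z):=\S_q\otimes_{\S_q^{\geq}}\C v_{0,0}$, with $E,X$ acting by $0$, $K$ by $\lambda$, $C$ by $z$ on $v_{0,0}$, is free over $\C[Y,F]$ on $\{v_{k,l}=Y^kF^lv_{0,0}\}$; the formulas for $K,C,Y,F$ are immediate, and those for $X,E$ follow by commuting $X$, resp.\ $E$, past $Y^kF^l$ using the identities for $XY^i,EY^i$ recorded above together with $FX=YK^{-1}+XF$, $EF=FE+\frac{K-K^{-1}}{q-q^{-1}}$, and $Xv_{0,0}=Ev_{0,0}=0$. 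This step is routine.

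\textbf{Simplicity.} All weights of $M(\lambda,z)$ lie in $\lambda q^{-\Z_+}$ and $\dim M(\lambda,z)_\lambda=1$, so $M(\lambda,z)$ has a unique maximal submodule, and it is simple if and only if it has no singular vector, i.e.\ no weight vector at level $n\geq1$ annihilated by $E$ and $X$: if $N\neq0$ is a proper submodule and $u\in N$ is a nonzero weight vector of maximal weight $\lambda q^{-n}$, then $Xu$ and $Eu$ lie in strictly higher (or trivial) weight spaces, hence vanish. Assume now $z\neq0$. A short computation rules out singular vectors at odd levels — at level $1$ one has $Xv_{1,0}=-zv_{0,0}\neq0$, and at level $2m+1$ the equation $Xu=0$ already forces $u=0$ — while at an even level $n=2d$, writing $u=\sum_{j=0}^d a_jv_{2j,d-j}$, the equation $Xu=0$ becomes a lower-triangular recursion with nonzero coefficients (this is where $z\neq0$ and $q$ not a root of unity are used), determining $a_1,\dots,a_d$ uniquely from $a_0$; substituting this into $Eu=0$ and simplifying, the $d$ resulting equations all collapse to the single relation $\lambda^2=q^{2d-3}$. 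Therefore $M(\lambda,z)$ has a singular vector $w$, unique up to scalar, exactly when $\lambda^2\in q^{-3+2\N}$, and is simple otherwise. I expect this uniform reduction of the $E$-equations to $\lambda^2=q^{2d-3}$ to be the technical heart of the proof.

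\textbf{The quotient, the case $z=0$, and conclusion.} When $\lambda^2=q^{2d-3}$, set $M'=\C[Y,F]w$; a weight-space-by-weight-space dimension count shows that the images of $\{v_{k,l}\mid k\in\Z_+,\ l\leq d-1\}$ form a basis of $N(\lambda,z):=M(\lambda,z)/M'$, that $M'$ is the smallest nonzero submodule, and that $N(\lambda,z)$ itself has no singular vector (by the same computation), so $M'$ is also the unique maximal submodule and $N(\lambda,z)$ is the unique simple quotient of $M(\lambda,z)$. Its action inherits the formulas for $K,C,Y,X,E$, and only $F\bar v_{k,d-1}=\bar v_{k,d}$ is nontrivial: reducing $a_0v_{0,d}=w-\sum_{j\geq1}a_jv_{2j,d-j}$ modulo $M'$ and applying $Y^k$ expresses it in the basis, where the ratios $a_j/a_0$ coming from the $X$-recursion are exactly the coefficients $\bigl(\tfrac{-q^{d-1}}{\lambda z(q+1)}\bigr)^{d-i}\binom{d}{i}_q$ in the statement (a short $q$-binomial identity). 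For part (i), when $z=0$ the subspace $YM(\lambda,0)=\mathrm{span}\{v_{k,l}\mid k\geq1\}$ is a nonzero proper submodule (closure uses $C=0$ to kill the $v_{k-1,l}$ term of $X.v_{k,l}$ and the $C$-term of $E.v_{k,l}$), and $M(\lambda,0)/YM(\lambda,0)$ is precisely the $U_q(\mathfrak{sl}_2)$-Verma module of highest weight $\lambda$ with $X=Y=0$; since any simple highest weight $\S_q$-module of central charge $0$ is a quotient of $M(\lambda,0)$ with $YM(\lambda,0)$ contained in its maximal submodule, it satisfies $XV=YV=0$ and is a simple $U_q(\mathfrak{sl}_2)$-module (compare Proposition 3.10 of \cite{GK}). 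Finally, for $z\neq0$ an arbitrary simple highest weight module of highest weight $\lambda$ is a simple quotient of $M(\lambda,z)$, hence equal to $M(\lambda,z)$ if $\lambda^2\notin q^{-3+2\N}$ and to $N(\lambda,z)$ otherwise.
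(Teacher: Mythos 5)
The paper does not actually prove Theorem \ref{simplehw}: it is recalled from Dobrev--Doebner--Mrugalla \cite{DD} and Gan--Khare \cite{GK} (``Here we recall these results which are necessary for our arguments''), so there is no proof in the text for your attempt to be measured against. Your plan --- realise $M(\lambda,z)$ by induction from the subalgebra generated by $E,X,K^{\pm1},C$, hunt for singular vectors level by level, and quotient by the unique one --- is the natural route and is the one the cited references take; up to the points below it is structurally sound.

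That said, there is a genuine gap exactly where you flag one. You reduce simplicity of $M(\lambda,z)$ to the claim that at level $2d$ the $X$-recursion determines $(a_1,\dots,a_d)$ from $a_0$ and that, after substitution, all $d$ equations $Eu=0$ collapse to the single condition $\lambda^2=q^{2d-3}$. This is not a routine observation: it is precisely the simplicity criterion being proved, and you neither carry it out nor cite a reference for it. (For $d=1$ it is a one-line check giving $\lambda^2=q^{-1}$, and the $X$-recursion does yield the stated $q$-binomial coefficients $a_j/a_0=\bigl(\tfrac{-q^{d-1}}{\lambda z(q+1)}\bigr)^j\binom{d}{j}_q$, but the general collapse of the $E$-system is a nontrivial $q$-identity that must be verified.) Until this is done, the ``simple iff $\lambda^2\notin q^{-3+2\N}$'' dichotomy and the uniqueness of $w$ are unproved.

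A second, smaller imprecision: you assert that $N(\lambda,z)$ has no singular vectors ``by the same computation.'' It is not the same computation. In $M(\lambda,z)$ at level $2d$ the $X$-equations have a one-dimensional kernel and the $E$-equations are needed; in $N(\lambda,z)$ at level $2m$ with $m\ge d$ the basis has only $a_0,\dots,a_{d-1}$ (the $a_d$-slot is killed), and the $X$-equations alone already form a triangular system with nonzero diagonal entries forcing $u=0$, with no appeal to $E$. This actually makes the $N$-case easier, but it should be stated correctly, since the simplicity of $N(\lambda,z)$ is what makes it the unique simple quotient. Finally, a clean way to see $M'=\C[Y,F]w$ is a submodule and that every nonzero proper submodule of $M(\lambda,z)$ contains it is to note $M'$ is the highest weight module generated by $w$, of highest weight $\lambda q^{-2d}$ with $(\lambda q^{-2d})^2=q^{-2d-3}\notin q^{-3+2\N}$, hence simple by the (yet-to-be-verified) criterion; this packages the argument tidily and avoids the loose phrase ``smallest nonzero submodule.'' Your $z=0$ analysis and the final reduction of an arbitrary simple highest weight module to $M(\lambda,z)$ or $N(\lambda,z)$ are fine.
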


For any simple $\S_q$-module we have the following property.
\begin{lemma}\label{lonil}
Let $s\in\{E,F,X,Y\}$ and $V$ be a simple $\S_q$-module. If the action of  $s$ on $V$ is not injective, then $s$ acts on $V$ locally nilpotently.
\end{lemma}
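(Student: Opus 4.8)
The plan is to exploit the fact that, for each $s\in\{E,F,X,Y\}$, the kernel $\ker_V(s)=\{v\in V\mid sv=0\}$ is nonzero by hypothesis, and to show it generates $V$ as a module, so that $V$ is spanned by vectors killed by $s$ together with their images under a PBW-ordered list of the remaining generators. Concretely, I would first observe that $W:=\{v\in V\mid s^Nv=0\text{ for some }N\in\Z_+\}$ is a nonzero subspace (it contains $\ker_V(s)\neq 0$), and then prove $W$ is an $\S_q$-submodule; simplicity of $V$ then forces $W=V$, which is exactly the assertion that $s$ acts locally nilpotently.

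The work is therefore in checking that $W$ is stable under each generator $t\in\{C,E,F,K,K^{\pm1},X,Y\}$. The mechanism is the standard one: for each such $t$ there is, in $\S_q$, a relation of the form $s^n t = \sum_j a_j(t, K^{\pm1},\dots)\, s^{n-j}$ where the sum runs over $j\ge 0$ up to some fixed bound independent of $n$ (for $K,K^{\pm1},C$ one simply commutes $s$ past, picking up a scalar or nothing; for the other generators one iterates the defining relations \eqref{R1}--\eqref{R4}, or equivalently uses the commutation formulas already displayed in the excerpt, e.g. $EY^i=q^{-i}Y^iE+[i]_qY^{i-1}X-\cdots$ and $XY^i=q^iY^iX-\cdots$, and their analogues obtained by symmetry). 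Given such a bounded-length relation, if $v\in W$ with $s^Nv=0$, then $s^{N+c}(tv)=0$ for $c$ the bound, so $tv\in W$. Carrying this out for $s=Y$ uses precisely the two formulas for $EY^i$ and $XY^i$ quoted before Lemma~2.3, together with $FY=YF$, $KY=q^{-1}YK$, $CY=qYC$; for $s=X$ one uses $EX=qXE$, $FX=YK^{-1}+XF$, $qYX=XY+C$, $KX=q^{-1}XK$, $CX=XC$ and the resulting bounded formulas for $X^n$ against $E,F,Y$; and the cases $s=E$ and $s=F$ are handled symmetrically, with the roles of highest/lowest weight interchanged, using \eqref{R1} and the adjoint action of $E,F$ on $X,Y$.

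The one point that needs a little care — and the place I'd expect the main (mild) obstacle — is verifying that for $s\in\{E,F\}$ the commutator of $s^n$ with $F$ (resp.\ $E$) still has bounded length: here one needs $[E,F]=\dfrac{K-K^{-1}}{q-q^{-1}}$ to produce, after iteration, only a fixed number of correction terms $s^{n-1},\dots$ with coefficients polynomial in $K^{\pm1}$, which is the usual $\mathfrak{sl}_2$ computation and is indeed bounded. Once all four cases are in place, $W$ is a submodule, $W\neq 0$, and simplicity gives $W=V$, completing the proof. (Alternatively, and perhaps more cleanly, one can invoke the general principle that in a Noetherian algebra the set of locally $s$-nilpotent vectors of a module is a submodule whenever $s$ is a locally ad-nilpotent element of the algebra with respect to all generators — but spelling out the bounded-length relations as above is elementary and self-contained.)
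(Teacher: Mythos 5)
Your proposal is correct and is, at its core, the same argument the paper uses: both exploit simplicity of $V$ together with the fact that commuting a power $s^n$ past any PBW generator of $\S_q$ lowers the $s$-degree by at most a uniform amount. The difference is purely organizational: you package this as ``the space $W$ of locally $s$-nilpotent vectors is a nonzero $\S_q$-submodule, hence equals $V$,'' while the paper fixes a single $v$ with $sv=0$, notes $V=\S_q v$, and directly verifies $s^m\bigl(X^aK^bF^cY^dv\bigr)=0$ (for $s=E$) and $s^m\bigl(E^aK^bY^cF^dv\bigr)=0$ (for $s=X$) for $m\gg0$, using the explicit closed/factored formulas of Lemma~3.4, notably $E^{r+1}Y^r=\prod_{i=1}^{r}(q^{-i}EY+q[i]_qX)\,E$ and the induction step $X^{r}F^{s}v=0\Rightarrow X^{r+2}F^{s+1}v=0$; the cases $s\in\{F,Y\}$ are then disposed of by the involution swapping $E\leftrightarrow F$, $X\leftrightarrow Y$, $K\leftrightarrow K^{-1}$. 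One small caution on your sketch: the ``bounded-length'' claim requires writing $s^n t=\sum_{j\le c} a_j\,s^{n-j}$ with the $s^{n-j}$ on the right (the $a_j$ may depend on $n$ and on the other generators), and for some pairs (e.g.\ $s=X$, $t=F$, where $c=2$, or $s=Y$, $t=E$) one must also re-sort $Y$'s or $X$'s that appear in the correction terms to verify the bound; your parenthetical appeal to a general ``locally ad-nilpotent'' principle would not literally apply here since, e.g., $\mathrm{ad}(E)^n(K)=(q^{-2}-1)^nKE^n\neq0$, so the explicit bounded-relation route you outline first is the one to use.
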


To prove this lemma, we need the following equalities.
\begin{lemma}
For $r,s\in\N$, the following equalities hold in the algebra $\S_q$.
\begin{enumerate}[(i)]
\item $E^{(r)}F^{(s)}=\sum\limits_{j\geq0}F^{(s-j)}\left[\begin{smallmatrix}K;2j-r-s\\j\end{smallmatrix}\right]E^{(r-j)},$
 where $E^{(r)}=\frac{E^r}{[r]_q!},F^{(r)}=\frac{F^r}{[r]_q!}$,\\
  $\left[\begin{smallmatrix}K;r\\s\end{smallmatrix}\right]=\prod\limits_{j=1}^s\frac{Kq^{r-j+1}-K^{-1}q^{-r+j-1}}{q^j-q^{-j}}$.
\item $E^{r+1}Y^r=(q^{-r}EY+q[r]_qX)E^rY^{r-1}$.%, where $c_i\in\C$.
\item $X^{r}Y^s=q^{rs}Y^sX^r+\sum\limits_{i=1}^{\min(r,s)}(-1)^iq^{rs+i}\prod\limits_{j=0}^{i-1}\frac{(1-q^{-r+j})(1-q^{-s+j})}{(q-1)(q^{j+1}-1)}C^iY^{s-i}X^{r-i}.$%\sum\limits_{i=1}^sc(i,s,r)Y^{s-i}X^{r-i}$, where $r\geq s,c(i,r,s)\in\C[C]$.
\item $XF^r=F^rX-q^{r-1}[r]_qYF^{r-1}K^{-1}$.
\end{enumerate}
\end{lemma}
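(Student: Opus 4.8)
The plan is to prove the four identities by induction on $r$, in each case reducing to the defining relations (\ref{R1})--(\ref{R4}) and a few elementary identities between $q$-numbers. For part (i) I note first that inside $\S_q$ the elements $E$, $F$, $K^{\pm1}$ satisfy exactly the relations of $U_q(\mathfrak{sl}_2)$, so the asserted equality is the classical commutation formula for divided powers and may simply be quoted (e.g.\ from the books of Lusztig or Jantzen). For a self-contained argument I would induct on $r$: the case $r=1$, namely $EF^{(s)}=F^{(s)}E+F^{(s-1)}\left[\begin{smallmatrix}K;1-s\\1\end{smallmatrix}\right]$, follows from $[E,F]=\frac{K-K^{-1}}{q-q^{-1}}$ by a short induction on $s$; the step from $r$ to $r+1$ then rests on the identities $E\left[\begin{smallmatrix}K;a\\b\end{smallmatrix}\right]=\left[\begin{smallmatrix}K;a-2\\b\end{smallmatrix}\right]E$ and the $q$-Pascal rule for the symbols $\left[\begin{smallmatrix}K;a\\b\end{smallmatrix}\right]$.

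Parts (ii) and (iv) are direct inductions on $r$ using only cross relations. For (ii), the base case $E^2Y=(q^{-1}EY+qX)E$ is immediate from $EY=X+q^{-1}YE$ and $EX=qXE$ of (\ref{R3}); for the inductive step I write $E^{r+2}Y^{r+1}=E\,(E^{r+1}Y^r)\,Y$, insert the induction hypothesis, push the surplus $E$ past $X$ and $Y$ using $EX=qXE$ together with the $r=1$ identity, and collect terms, the only point needing attention being the scalar identity $q^{1-r}+q^2[r]_q=q[r+1]_q$. For (iv) I use $XF=FX-YK^{-1}$ (from (\ref{R4})), $FY=YF$, and $K^{-1}F=q^2FK^{-1}$ (from (\ref{R1})); computing $XF^{r+1}=(XF^r)F$ and commuting $K^{-1}$ through $F$ reduces the step to the scalar identity $1+q^{r+1}[r]_q=q^r[r+1]_q$. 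Both scalar identities are checked at once.

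For part (iii), the elements $X$, $Y$ and the central element $C$ form a $q$-analogue of the Weyl algebra via $qYX-XY=C$, and the claim is the precise expansion of $X^rY^s$ into a finite sum of monomials $C^iY^{s-i}X^{r-i}$. I would again induct on $r$; the base case $r=1$ is the formula $XY^s=q^sY^sX-\frac{q^s-1}{q-1}CY^{s-1}$ established earlier, which matches the $i=1$ term of the stated formula after the rewriting $-\frac{q^s-1}{q-1}=-q^{s+1}\frac{(1-q^{-1})(1-q^{-s})}{(q-1)^2}$. For the inductive step I left-multiply the formula for $X^rY^s$ by $X$, rewrite each $X\,Y^{s-i}X^{r-i}$ as $(XY^{s-i})X^{r-i}$, apply the $r=1$ case, and re-index the resulting double sum. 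The main obstacle --- and essentially the only genuine computation in the lemma --- is the bookkeeping: one must verify that the coefficient products $\prod_{j=0}^{i-1}\frac{(1-q^{-r+j})(1-q^{-s+j})}{(q-1)(q^{j+1}-1)}$ obey the Pascal-type recursion forced by this rearrangement. Rewriting $1-q^{-m+j}=q^{-m+j}(q^{m-j}-1)$ turns these into ratios of Gaussian factorials, after which the required recursion is a standard $q$-binomial identity and the induction closes; parts (i), (ii) and (iv) are routine once their base cases are isolated.
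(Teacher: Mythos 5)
Your plan matches the paper's in all four parts. For (i) the paper likewise just cites Lusztig's formula (a2) on p.~103 of \cite{L}; for (ii) and (iv) the paper carries out exactly the induction on $r$ you describe, and it closes the inductions on the same two scalar identities $q^{1-r}+q^2[r]_q=q[r+1]_q$ and $1+q^{r+1}[r]_q=q^r[r+1]_q$ that you isolate. For (iii) the paper's route is slightly different in its organization: after establishing the one-variable cases $XY^s$ and $X^rY$ by short inductions, it invokes the symmetry $q\mapsto q^{-1}$, $C\mapsto -q^{-1}C$ to reduce to the regime $s<r$ and then carries out the induction step in both variables; this keeps $\min(r,s)=s$ throughout and spares any boundary discussion. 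Your single induction on $r$ alone, grounded on the base case $XY^s$ for all $s$, is equally valid but has to notice that the term created at index $\min(r,s)+1$ after re-indexing either vanishes (when $r\geq s$, because of the factor $q^{s-\min(r,s)}-1=0$) or supplies the new top summand of the sum for $X^{r+1}Y^s$ (when $r<s$). Finally, where you propose to recognize the coefficient recursion as a standard $q$-binomial identity after rewriting $1-q^{-m+j}=q^{-m+j}(q^{m-j}-1)$, the paper instead verifies it by a direct multi-line coefficient computation; the content is the same, and either presentation is adequate.
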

\begin{proof}
(i) comes from the formula (a2) on page 103 in \cite{L}.

(ii) follows from induction on $r$.
\begin{align*}
E^{r+2}Y^{r+1}&=E(E^{r+1}Y^{r})Y\\
&=E(q^{-r}EY+q[r]_qX)E^rY^r\\
&=(q^{-r}E^2Y+q^2[r]_qXE)E^rY^r\\
&=(q^{-r}E(q^{-1}YE+X)+q^2[r]_qXE)E^rY^r\\
&=(q^{-r-1}EY+(q^{-r+1}+q^2[r]_q)X)E^{r+1}Y^r\\
&=(q^{-r-1}EY+q[r+1]_qX)E^{r+1}Y^r.
\end{align*}

(iii) Following from induction on $s$ and $r$, it is easy to get
\begin{align*}
XY^s&=q^sY^sX-\frac{q^s-1}{q-1}CY^{s-1},\\
X^rY&=q^rYX^r-\frac{q^r-1}{q-1}CX^{r-1}.
\end{align*}
Replacing $q$ by $q^{-1}$ and $C$ by $-q^{-1}C$, we may assume that $s<r$. Then the induction follows from
\begin{align*}
X^{r+1}Y^s=&X\Big(q^{rs}Y^sX^r+\sum\limits_{i=1}^s(-1)^iq^{rs+i}\prod\limits_{j=0}^{i-1}\frac{(1-q^{-r+j})(1-q^{-s+j})}{(q-1)(q^{j+1}-1)}C^iY^{s-i}X^{r-i}\Big)\\
=&q^{rs}XY^sX^r+\sum\limits_{i=1}^s(-1)^iq^{rs+i}\prod\limits_{j=0}^{i-1}\frac{(1-q^{-r+j})(1-q^{-s+j})}{(q-1)(q^{j+1}-1)}C^iXY^{s-i}X^{r-i}\\
=&q^{rs}\Big(q^sY^sX^{r+1}-\frac{q^s-1}{q-1}CY^{s-1}X^r\Big)\\
&+\sum\limits_{i=1}^{s-1}(-1)^iq^{rs+i}\prod\limits_{j=0}^{i-1}\frac{(1-q^{-r+j})(1-q^{-s+j})}{(q-1)(q^{j+1}-1)}C^i\Big(q^{s-i}Y^{s-i}X-\frac{q^{s-i}-1}{q-1}CY^{s-i-1}\Big)X^{r-i}\\
&+(-1)^sq^{(r+1)s}\prod\limits_{j=0}^{s-1}\frac{(1-q^{-r+j})(1-q^{-s+j})}{(q-1)(q^{j+1}-1)}C^sX^{r+1-s}\\
=&q^{(r+1)s}Y^sX^{r+1}-q^{rs}\frac{q^s-1}{q-1}CY^{s-1}X^r\\
&+\sum\limits_{i=1}^s(-1)^iq^{(r+1)s}\prod\limits_{j=0}^{i-1}\frac{(1-q^{-r+j})(1-q^{-s+j})}{(q-1)(q^{j+1}-1)}C^iY^{s-i}X^{r+1-i}\\
&+\sum\limits_{i=1}^{s-1}(-1)^{i+1}q^{rs+i}\frac{q^{s-i}-1}{q-1}\prod\limits_{j=0}^{i-1}\frac{(1-q^{-r+j})(1-q^{-s+j})}{(q-1)(q^{j+1}-1)}C^{i+1}Y^{s-i-1}X^{r-i}\\
=&q^{(r+1)s}Y^sX^{r+1}-q^{(r+1)s+1}\frac{(1-q^{-r-1})(1-q^{-s})}{(q-1)^2}CY^{s-1}X^r\\
&+\sum\limits_{i=2}^s(-1)^i\Big(q^{(r+1)s}\frac{(1-q^{-r+i-1})(1-q^{-s+i-1})}{(q-1)(q^i-1)}+q^{rs+i-1}\frac{q^{s-i+1}-1}{q-1}\Big)\\
&\cdot\prod\limits_{j=0}^{i-2}\frac{(1-q^{-r+j})(1-q^{-s+j})}{(q-1)(q^{j+1}-1)}C^iY^{s-i}X^{r+1-i}\\
=&q^{(r+1)s}Y^sX^{r+1}+\sum\limits_{i=1}^{s}(-1)^iq^{(r+1)s+i}\prod\limits_{j=0}^{i-1}\frac{(1-q^{-r-1+j})(1-q^{-s+j})}{(q-1)(q^{j+1}-1)}C^iY^{s-i}X^{r+1-i};
\end{align*}
and
\begin{align*}
&X^rY^{s+1}\\
=&q^{rs}Y^sX^rY+\sum\limits_{i=1}^s(-1)^iq^{rs+i}\prod\limits_{j=0}^{i-1}\frac{(1-q^{-r+j})(1-q^{-s+j})}{(q-1)(q^{j+1}-1)}C^iY^{s-i}X^{r-i}Y\\
=&q^{rs}Y^s\Big(q^rYX^r-\frac{q^r-1}{q-1}CX^{r-1}\Big)\\
&+\sum\limits_{i=1}^s(-1)^iq^{rs+i}\prod\limits_{j=0}^{i-1}\frac{(1-q^{-r+j})(1-q^{-s+j})}{(q-1)(q^{j+1}-1)}C^iY^{s-i}\Big(q^{r-i}YX^{r-i}-\frac{q^{r-i}-1}{q-1}CX^{r-i-1}\Big)\\
=&q^{r(s+1)}Y^{s+1}X^r-q^{rs}\frac{q^r-1}{q-1}CY^sX^{r-1}\\
&+\sum\limits_{i=1}^s(-1)^iq^{r(s+1)}\prod\limits_{j=0}^{i-1}\frac{(1-q^{-r+j})(1-q^{-s+j})}{(q-1)(q^{j+1}-1)}C^iY^{s+1-i}X^{r-i}\\
&+\sum\limits_{i=1}^s(-1)^{i+1}q^{rs+i}\frac{q^{r-i}-1}{q-1}\prod\limits_{j=0}^{i-1}\frac{(1-q^{-r+j})(1-q^{-s+j})}{(q-1)(q^{j+1}-1)}C^{i+1}Y^{s-i}X^{r-i-1}\\
=&q^{r(s+1)}Y^{s+1}X^r-q^{r(s+1)+1}\frac{(1-q^{-r})(1-q^{-s-1})}{(q-1)^2}CY^sX^{r-1}\\
&+\sum\limits_{i=2}^s(-1)^iq^{r(s+1)+i}\Big(q^{-i}\frac{(1-q^{-r+i-1})(1-q^{-s+i-1})}{(q-1)(q^i-1)}+q^{-r-1}\frac{q^{r-i+1}-1}{q-1}\Big)\\
&\cdot\prod\limits_{j=0}^{i-2}\frac{(1-q^{-r+j})(1-q^{-s+j})}{(q-1)(q^{j+1}-1)}C^iY^{s+1-i}C^{r-i}\\
&+(-1)^{s+1}q^{(r+1)s}\frac{q^{r-s}-1}{q-1}\prod\limits_{j=0}^{s-1}\frac{(1-q^{-r+j})(1-q^{-s+j})}{(q-1)(q^{j+1}-1)}C^{s+1}X^{r-s-1}\\
=&q^{r(s+1)}Y^{s+1}X^r+\sum\limits_{i=1}^{s+1}(-1)^iq^{r(s+1)+i}\prod\limits_{j=0}^{i-1}\frac{(1-q^{-r+j})(1-q^{-s+j})}{(q-1)(q^{j+1}-1)}C^iY^{s+1-i}X^{r-i}.
\end{align*}
(iv) follows from induction on $r$:
\begin{align*}
XF^{r+1}&=XF^rF\\
&=(F^rX-q^{r-1}[r]_qYF^{r-1}K^{-1})F\\
&=F^rXF-q^{r+1}[r]_qYF^{r}K^{-1}\\
&=F^r(FX-YK^{-1})-q^{r+1}[r]_qYF^{r}K^{-1}\\
&=F^{r+1}X-q^r[r+1]_qYF^{r}K^{-1}.\tag*{\qedhere}
\end{align*}
\end{proof}

{\it Proof of Lemma \ref{lonil}.} We only need to prove the lemma for $u=E,X$. By assumption, there exists a nonzero vector $v\in V$ such that $u.v=0$.

(i) If $u=E$, then we need to show that $E$ acts nilpotently on $X^aK^bF^cY^dv$. This follows from the following computation. For $m\gg0$, we have
\begin{align*}
&\frac{1}{[m]_q!}\frac{1}{[c]_q!}E^mX^aK^bF^cY^dv\\
=&q^{ma-2mb}X^aK^bE^{(m)}F^{(c)}Y^dv\\
=&q^{ma-2mb}X^aK^b\sum\limits_{j\geq0}F^{(c-j)}\left[\begin{smallmatrix}K;2j-m-c\\j\end{smallmatrix}\right]E^{(m-j)}Y^dv\\
=&\sum\limits_{j\geq0}q^{ma-2mb}\prod\limits_{i=0}^d[m-j-i]_qX^aK^bF^{(c-j)}\left[\begin{smallmatrix}K;2j-m-c\\j\end{smallmatrix}\right]E^{(m-j-d-1)}E^{d+1}Y^dv\\
=&\sum\limits_{j\geq0}q^{ma-2mb}\prod\limits_{i=0}^d[m-j-i]_qX^aK^bF^{(c-j)}\left[\begin{smallmatrix}K;2j-m-c\\j\end{smallmatrix}\right]E^{(m-j-d-1)}(q^{-d}EY+q[d]_qX)E^dY^{d-1}v\\
=&\sum\limits_{j\geq0}q^{ma-2mb}\prod\limits_{i=0}^d[m-j-i]_qX^aK^bF^{(c-j)}\left[\begin{smallmatrix}K;2j-m-c\\j\end{smallmatrix}\right]E^{(m-j-d-1)}\prod\limits_{i=1}^{d}(q^{-i}EY+q[i]_qX)Ev\\
=&0.
\end{align*}

(ii) For $u=X$, similarly it suffices to show that $X$ acts nilpotently on $E^aK^bY^cF^dv$. Since for $m\gg0$, we have
\begin{align*}
X^mE^aK^bY^cF^dv&=q^{-ma-mb}E^aK^bX^mY^cF^dv\\
&=q^{mc-ma-mb}E^aK^bY^cX^mF^dv+\sum\limits_{i=1}^cc(i,s,r)q^{-ma-mb}E^aK^bY^{c-i}X^{m-i}F^dv,
\end{align*}
where $c(i,s,r)=(-1)^iq^{rs+i}\prod\limits_{j=0}^{i-1}\frac{(1-q^{-r+j})(1-q^{-s+j})}{(q-1)(q^{j+1}-1)}$. We only need to show that $X$ acts nilpotently on $F^dv$ for any $d\in\N$, which follows by the following fact: if $X^rF^sv=0$, then $X^{r+2}F^{s+1}v=0$.

Indeed, we have
\begin{align*}
X^{r+2}F^{s+1}v&=X^{r+1}(F^{s+1}X-q^s[s+1]_qYF^{s}K^{-1})v\\
&=-q^s[s+1]_qX^{r+1}YF^{s}K^{-1}v\\
&=-q^s[s+1]_q\Big(q^{r+1}YX^{r+1}F^s-\frac{q^{r+1}-1}{q-1}X^{r}\Big)F^sK^{-1}v\\
&=0.\tag*{\qed}
\end{align*}
%where $\mu$ is some polynomial in $\C[C]$. This completes the proof. \qed

For $L\in\cN$, we have the following crucial property.
\begin{theorem}\label{wdim}
Let $L\in\cN$. Then $\mathrm{supp}(L)=\lambda q^\Z$ for some $\lambda\in\C^*$, and $\dim L_{\lambda q^i}=\dim L_{\lambda q^j}$, for all $i,j\in\Z$.
\end{theorem}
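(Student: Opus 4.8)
The plan is to exploit Lemma~\ref{lonil} together with the simplicity of $L$. Since $L\in\cN$, the module $L$ has neither a highest nor a lowest weight, and not both of $X,Y$ annihilate $L$. First I would observe that $K$ acts diagonally with eigenvalues in a single coset $\lambda q^{\Z}$ (this is the remark in the introduction that $L=L(\omega)$), so the real content is: (a) every weight in $\lambda q^{\Z}$ actually occurs, and (b) all weight spaces have the same (finite) dimension. For (b) the natural mechanism is to produce, for suitable elements $s$ of $\S_q$, operators $L_\mu\to L_{\mu'}$ that are \emph{bijective}; by Lemma~\ref{lonil}, if $s\in\{E,F,X,Y\}$ acts on $L$ without being injective then it acts locally nilpotently, and since $L$ has no highest/lowest weight vector for the corresponding direction, one shows this forces $s$ to act as $0$ on all of $L$ in the relevant cases, or else $s$ acts injectively. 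Combined with a parallel surjectivity argument (using that the cokernel of $s$ would generate a proper submodule, or passing to the opposite statement via the involution of \cite{DD}), one gets that the nonzero ones among $E,F,X,Y$ act bijectively, shifting weights by $q^{2},q^{-2},q,q^{-1}$ respectively; the gcd of the available shifts is $1$ because $X$ or $Y$ acts nonzero (shift $\pm1$), giving both that $\mathrm{supp}(L)$ is all of $\lambda q^{\Z}$ and that consecutive weight spaces are isomorphic, hence all equal in dimension.

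More concretely, I would argue in cases. Suppose $X$ acts nonzero on $L$. If $X$ were not injective, Lemma~\ref{lonil} makes it locally nilpotent; pick $v\neq0$ with $Xv=0$ but $v\notin$ (highest or lowest weight) — then using relations (\ref{R2})--(\ref{R3}) one checks that $\{w\in L\mid X^N w=0 \text{ for some }N\}$ is a nonzero submodule (here one needs $X^rF^s$, $X^rY^s$, $X^rE^a$ expansions from the previous lemma to see it is stable under $F,Y,E$ and $K$-stability is clear), hence is all of $L$; but then one locates a weight vector killed by $X$ of extremal weight, and chasing with $E$ (which lowers $X$-degree suitably) and $Y$ produces a highest- or lowest-weight vector, contradiction. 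So $X$ is injective whenever it is nonzero; the same argument with the involution swapping the roles of $X\leftrightarrow Y$ (up to scalars) handles $Y$. A dual "surjective" version: the image $XL$ is a submodule-like subspace whose complement, if nonzero, would again yield an extremal vector, so $X$ is onto; hence $X\colon L_\mu\xrightarrow{\ \sim\ }L_{q\mu}$ for every $\mu$.

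Then $\mathrm{supp}(L)$ is closed under $\mu\mapsto q\mu$ and $\mu\mapsto q^{-1}\mu$ (take $X$ or, symmetrically, $Y$; if only one of $X,Y$ is nonzero, one still gets the step $\pm1$ and combines with the step $\pm2$ from the $\mathfrak{sl}_2$ generators, whichever of $E,F$ is injective — and at least one of $E,F$ must be injective since $L$ has no highest and no lowest weight), so $\mathrm{supp}(L)=\lambda q^{\Z}$, and the bijections give $\dim L_{\lambda q^i}=\dim L_{\lambda q^j}$ for all $i,j$. Finiteness of this common dimension is part of the hypothesis $L\in\cN$ (finite-dimensional weight spaces). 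The main obstacle I anticipate is the bookkeeping in the case analysis — precisely, ruling out the "locally nilpotent but nonzero" alternative for $X$ (and for $Y$), i.e. showing that local nilpotency of $X$ on a simple module without highest/lowest weight is impossible unless $X=0$; this is where one must combine Lemma~\ref{lonil}, the commutation formulas for $X^rF^s$, $X^rY^s$, $XF^r$ from the preceding lemma, and a careful extraction of an extremal weight vector annihilated by $X$ that is then pushed by $E$ (using $EX=qXE$, $EY=X+q^{-1}YE$) to a genuine highest-weight vector.
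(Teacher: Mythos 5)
Your overall plan — prove that the nonzero ones among $E,F,X,Y$ act bijectively, then read off constancy of the weight-space dimensions — is genuinely different from the paper's argument, and as written it has a real gap. The paper does \emph{not} prove injectivity of $X$ first; it argues by contradiction directly on the dimensions. Assuming $\dim L_{\lambda q^i}>\dim L_{\lambda q^{i+1}}$ for some $i$, it deduces that $X$ is not injective hence locally nilpotent (Lemma~\ref{lonil}); then it splits on whether $\dim L_{\lambda q^{i-1}}>\dim L_{\lambda q^{i+1}}$ (forcing $E$ locally nilpotent, hence a common kernel vector for $E$ and $X$ via $EX=qXE$, hence a highest weight vector) or not (forcing $\dim L_{\lambda q^{i-1}}<\dim L_{\lambda q^i}$, hence $Y$ locally nilpotent). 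In the second branch it further splits on the central charge: if $CL=0$, the $q$-commutation $qYX-XY=C$ gives a common kernel vector for $X$ and $Y$, and the set of such vectors is checked to be a submodule, contradicting $L\in\cN$; if $CL\neq0$, a short computation with $qYX^{m+1}=X^{m+1}Y+\tfrac{q^{m+1}-1}{q-1}X^mC$ on a vector killed by $Y$ but not by $X^m$ gives a contradiction. Importantly, in the paper bijectivity of $E,F,X,Y$ is Corollary~5.2, which is deduced \emph{from} Theorem~\ref{wdim} (via Lemma~5.1), so your proposed logical order is inverted relative to the paper's.

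The concrete gaps in your write-up: (1) you invoke "a weight vector killed by $X$ of extremal weight," but $\ker X$ is a weight subspace of a module whose support is an entire $q^{\Z}$-coset, and there is no reason for $\ker X$ to have a highest or lowest weight; the "chasing with $E$ and $Y$" that is supposed to produce a genuine highest-weight vector is not justified. (2) Your surjectivity step — "the image $XL$ is a submodule-like subspace whose complement would yield an extremal vector" — does not work, since $XL$ is not a submodule (it is not $Y$-stable in general) and "submodule-like" is not a usable notion. What \emph{would} close the loop cleanly is: both $X$ and $Y$ act nonzero on any $L\in\cN$ (from $FX=YK^{-1}+XF$ and $EY=X+q^{-1}YE$, each of $X=0$, $Y=0$ on $L$ forces the other to vanish, which is excluded), so if one can show \emph{injectivity} of both $X$ and $Y$, then $\dim L_{\lambda q^i}\le\dim L_{\lambda q^{i+1}}$ and $\dim L_{\lambda q^{i+1}}\le\dim L_{\lambda q^i}$ for all $i$, giving constancy and hence bijectivity for free, with no need for a separate surjectivity argument. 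But proving injectivity of a nonzero locally-nilpotent $X$ is exactly the hard part and requires, as in the paper, a careful split on the central charge; it cannot be finessed by the extremal-vector device you sketch.
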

\begin{proof}
Suppose $L=\oplus_{i\in\Z}L_{\lambda q^i}$. If there exists some $i\in\Z$ such that $\dim L_{\lambda q^i}>\dim L_{\lambda q^{i+1}}$,
 then the action of $X$ on $L$ is not injective, and hence $X$ acts on $L$ locally nilpotently.

If $\dim L_{\lambda q^{i-1}}>\dim L_{\lambda q^{i+1}}$, then the action of $E$ on $L$ is not injective and $E$ acts locally nilpotently on $L$. Since $EX=qXE$, there exists $v\in L$ such that $Xv=Ev=0$, which means that $L$ is a highest weight module. This contradicts with our assumption.

Therefore, we have $\dim L_{\lambda q^{i-1}}\leq\dim L_{\lambda q^{i+1}}<\dim L_{\lambda q^i}$. From this we know that $Y$ acts locally nilpotently on $L$. If $L$ has zero central charge, then following from $qYX-XY=C$, there exists nonzero $v\in L$ such that $Xv=Yv=0$. Hence $XL=YL=0$, which is impossible. If $L$ has nonzero central charge $z$, then there exists nonzero $v\in L$ with $Yv=0, X^mv\neq0$ and $X^{m+1}v=0$. So, we have
\[0=q^{m+1}YX^{m+1}v=(X^{m+1}Y+\frac{q^{m+1}-1}{q-1}X^mC)v=z\frac{q^{m+1}-1}{q-1}X^mv\neq0,\]
which is a contradiction.
\end{proof}

\section{Twisting functor}

In this section, we recall the technique of localization which
 was used  by Mathieu to classify  simple weight modules over simple Lie algebras, see \cite{M}.

Since for $u\in\{F,Y\}$, $\{u^i|i\in\Z^+\}$ is an Ore set for $\S_q$, we have the following automorphism.
\begin{proposition}
For $b\in\C^*$, the assignment
\begin{align}
&\Theta_b^{(F)}(F^{\pm1})=F^{\pm1},\ \Theta_b^{(F)}(Y)=Y,\  \Theta_b^{(F)}(C)=C,\\
&\Theta_b^{(F)}(X)=X-\frac{b-1}{q^2-1}F^{-1}YK^{-1},\ \Theta_b^{(F)}(K^i)=b^{-i}K^i,\\
&\Theta_b^{(F)}(E)=E+\frac{1-b^{-1}}{(q^2-1)(q-q^{-1})}KF^{-1}-\frac{1-b}{(q^{-2}-1)(q-q^{-1})}K^{-1}F^{-1}
\end{align}
extends uniquely to an automorphism $\Theta_b^{(F)}: \S_q^{(F)}\rightarrow\S_q^{(F)}$ and
the assignment
\begin{align}
&\Theta_b^{(Y)}(Y^{\pm1})=Y^{\pm1},\ \Theta_b^{(Y)}(F)=F,\ \Theta_b^{(Y)}(C)=C,\\
&\Theta_b^{(Y)}(X)=bX-\frac{b-1}{q-1}CY^{-1},\ \Theta_b^{(Y)}(K^i)=b^{-i}K^i,\\
&\Theta_b^{(Y)}(E)=b^{-1}E+\frac{b-b^{-1}}{q-q^{-1}}Y^{-1}X-\frac{b+qb^{-1}-q-1}{(q-q^{-1})(q-1)}CY^{-2}
\end{align}
extends uniquely to an automorphism $\Theta_b^{(Y)}: \S_q^{(Y)}\rightarrow\S_q^{(Y)}$.
\end{proposition}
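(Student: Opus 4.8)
The plan is to prove the statement for $\Theta_b^{(F)}$; the argument for $\Theta_b^{(Y)}$ is completely parallel. Since $\{F^i\mid i\in\Z_+\}$ is an Ore set in $\S_q$ (as recalled above), $\S_q$ sits inside the localization $\S_q^{(F)}$, which has the $\C$-basis obtained from a PBW basis of $\S_q$ by letting the exponent of $F$ range over all of $\Z$. Inside $\S_q^{(F)}$ the element $F$ is invertible, so for each $n\in\Z$ the conjugation $\phi_n\colon a\mapsto F^naF^{-n}$ is an algebra automorphism of $\S_q^{(F)}$.

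First I would compute $\phi_n$ on the generators $C,E,F,K^{\pm1},X,Y$. Directly from the defining relations one gets $\phi_1(F)=F$, $\phi_1(Y)=Y$, $\phi_1(C)=C$, $\phi_1(K)=q^2K$, $\phi_1(X)=X+q^{-2}F^{-1}YK^{-1}$ and $\phi_1(E)=E-\tfrac{K-K^{-1}}{q-q^{-1}}F^{-1}$; iterating and summing the resulting geometric series in $q^{2n}$ produces closed forms, e.g.
\[
\phi_n(K)=q^{2n}K,\qquad \phi_n(X)=X+\frac{1-q^{-2n}}{q^2-1}F^{-1}YK^{-1},
\]
together with a similar Laurent polynomial in $q^{\pm2n}$ for $\phi_n(E)$. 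Comparing with the statement, these are precisely the formulas defining $\Theta_b^{(F)}$ under the substitution $b=q^{-2n}$, so $\Theta_{q^{-2n}}^{(F)}=\phi_n$ for every $n\in\Z$.

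Next, for an arbitrary $b\in\C^*$ the listed formulas define elements $\Theta_b^{(F)}(s)\in\S_q^{(F)}$, and I would check that $\Theta_b^{(F)}$ preserves the relations (\ref{R1})--(\ref{R4}). Fix a defining relation $r(s_1,\dots,s_k)=0$. Expanded in the PBW-type basis of $\S_q^{(F)}$, the element $r\bigl(\Theta_b^{(F)}(s_1),\dots,\Theta_b^{(F)}(s_k)\bigr)$ has each coordinate a Laurent polynomial in $b$: the $\Theta_b^{(F)}(s_i)$ are $\C[b,b^{-1}]$-combinations of finitely many fixed elements, and the multiplication of $\S_q^{(F)}$ does not involve $b$. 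By the previous step this Laurent polynomial vanishes for every $b\in\{q^{-2n}\mid n\in\Z\}$, an infinite set because $q$ is not a root of unity; a Laurent polynomial in one variable over $\C$ with infinitely many roots is zero, so every coordinate vanishes identically. Hence $\Theta_b^{(F)}$ respects the defining relations, inducing an algebra homomorphism $\S_q\to\S_q^{(F)}$, and since $\Theta_b^{(F)}(F)=F$ is invertible in $\S_q^{(F)}$ the universal property of Ore localization extends it uniquely to an algebra endomorphism $\Theta_b^{(F)}\colon\S_q^{(F)}\to\S_q^{(F)}$.

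Finally, to upgrade this to an automorphism I would establish $\Theta_b^{(F)}\circ\Theta_{b'}^{(F)}=\Theta_{bb'}^{(F)}$ for all $b,b'\in\C^*$. This holds for $b,b'\in\{q^{-2n}\mid n\in\Z\}$ because the left-hand side is $\phi_n\circ\phi_m=\phi_{n+m}$, and the same Laurent-polynomial argument, applied to the two parameters in turn (evaluate both sides on generators, expand in the PBW basis), extends it to arbitrary $b,b'$. Taking $b'=b^{-1}$ and noting $\Theta_1^{(F)}=\phi_0=\mathrm{id}$ gives $\Theta_b^{(F)}\circ\Theta_{b^{-1}}^{(F)}=\Theta_{b^{-1}}^{(F)}\circ\Theta_b^{(F)}=\mathrm{id}$, so $\Theta_b^{(F)}$ is an automorphism with inverse $\Theta_{b^{-1}}^{(F)}$. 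The case of $\Theta_b^{(Y)}$ runs identically, using $YXY^{-1}=q^{-1}X+q^{-1}CY^{-1}$, $YKY^{-1}=qK$, $FY=YF$, and the substitution $b=q^{-n}$. I expect the only real work to lie in the first step: getting the $q$-arithmetic right and confirming that a single substitution ($b=q^{-2n}$, resp.\ $b=q^{-n}$) reproduces the prescribed formula for every generator simultaneously. There is no conceptual obstacle — this is the standard localization/twisting argument, the density trick (infinitely many roots forces a Laurent polynomial to vanish) doing all the structural work for free.
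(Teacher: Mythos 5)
Your proposal is correct and follows essentially the same route as the paper: identify the formulas with conjugation by powers of $F$ (resp.\ $Y$) inside the localization for the infinitely many parameters $b\in q^{2\Z}$ (resp.\ $q^{\Z}$), then invoke the fact that a Laurent polynomial in $b$ with infinitely many zeros vanishes identically to extend the homomorphism property to all $b\in\C^*$. Your explicit derivation of bijectivity via $\Theta_b^{(u)}\circ\Theta_{b^{-1}}^{(u)}=\mathrm{id}$ is a small but genuine improvement in completeness, as the paper leaves that step implicit (the group law $\Theta_x^{(u)}\Theta_y^{(u)}=\Theta_{xy}^{(u)}$ only appears as the subsequent proposition).
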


\begin{proof}

First we  prove the proposition for $b=q^{2i}, i\in\N$.  We claim that formulas (4.1-4.3)  correspond to restriction of the conjugation automorphism
 $a\mapsto Y^{-2i}aY^{2i}$ of $\S_q^{(Y)}$. To prove this we proceed by induction on $i$. The base $i=0$ is immediate. Let us check the induction step. For $a=Y,F,C$, the formulas are obvious, we only need to check the formulas for $a=K^n, X$ and $E$.
For $u=F$, we have
\begin{align*}
F^{-1}\Theta_{q^{2i}}^{(F)}(F^{\pm1})F&=F^{-1}F^{\pm1}F=F^{\pm1},\\
\\
F^{-1}\Theta_{q^{2i}}^{(F)}(Y)F&=F^{-1}YF=Y,\\
\\
F^{-1}\Theta_{q^{2i}}^{(F)}(K^n)F&=F^{-1}q^{-2ni}K^nF=q^{-2ni}F^{-1}q^{-2n}FK^n=q^{-2n(i+1)}K^n,\\
\\
F^{-1}\Theta_{q^{2i}}^{(F)}(X)F&=F^{-1}\Big(X-\frac{q^{2i}-1}{q^2-1}F^{-1}YK^{-1}\Big)F\\
&=F^{-1}(FX-YK^{-1})-\frac{q^{2(i+1)}-q^2}{q^2-1}F^{-1}YK^{-1}\\
&=X-\frac{q^{2(i+1)}-1}{q^2-1}F^{-1}YK^{-1},
\end{align*}
\begin{align*}
&F^{-1}\Theta_{q^{2i}}^{(F)}(E)F\\
=&F^{-1}\Big(E+\frac{1-q^{-2i}}{(q^2-1)(q-q^{-1})}KF^{-1}-\frac{1-q^{2i}}{(q^{-2}-1)(q-q^{-1})}K^{-1}F^{-1}\Big)F\\
=&F^{-1}EF+\frac{1-q^{-2i}}{(q^2-1)(q-q^{-1})}F^{-1}K-\frac{1-q^{2i}}{(q^{-2}-1)(q-q^{-1})}F^{-1}K^{-1}\\
=&E+F^{-1}\frac{K-K^{-1}}{q-q^{-1}}+\frac{1-q^{-2i}}{(q^2-1)(q-q^{-1})}F^{-1}K-\frac{1-q^{2i}}{(q^{-2}-1)(q-q^{-1})}F^{-1}K^{-1}\\
=&E+\frac{1-q^{-2(i+1)}}{(q^2-1)(q-q^{-1})}KF^{-1}-\frac{1-q^{2(i+1)}}{(q^{-2}-1)(q-q^{-1})}K^{-1}F^{-1};
\end{align*}
For $u=Y$,
We have that
\begin{align*}
Y^{-2}\Theta_{q^{2i}}^{(Y)}(K^n)Y^2&=Y^{-2}q^{-2ni}K^nY^2=Y^{-2}q^{-2ni}q^{-2n}Y^2K^n=q^{-2n(i+1)}K^n,\\
\\
Y^{-2}\Theta_{q^{2i}}^{(Y)}(X)Y^2&=Y^{-2}\Big(q^{2i}X-\frac{q^{2i}-1}{q-1}CY^{-1}\Big)Y^2\\
&=q^{2i}Y^{-2}XY^2-\frac{q^{2i}-1}{q-1}CY^{-1}\\
&=q^{2i}\Big(q^2X-\frac{q^{2}-1}{q-1}CY^{-1}\Big)-\frac{q^{2i}-1}{q-1}CY^{-1}\\
&=q^{2(i+1)}X-\frac{q^{2(i+1)}-1}{q-1}CY^{-1},
\end{align*}
\begin{align*}
&Y^{-2}\Theta_{q^{2i}}^{(Y)}(E)Y^2\\
=&Y^{-2}\Big(q^{-2i}E+[2i]_qY^{-1}X-\frac{q^{2i}+q^{-2i+1}-q-1}{(q-q^{-1})(q-1)}CY^{-2}\Big)Y^2\\
=&q^{-2i}Y^{-2}EY^2+[2i]_qY^{-3}XY^2-\frac{q^{2i}+q^{-2i+1}-q-1}{(q-q^{-1})(q-1)}CY^{-2}\\
=&q^{-2i}(q^{-2}E+[2]_qY^{-1}X-CY^{-2})+[2i]_qY^{-1}\Big(q^2X-\frac{q^2-1}{q-1}CY^{-1}\Big)\\
&-\frac{q^{2i}+q^{-2i+1}-q-1}{(q-q^{-1})(q-1)}CY^{-2}\\
=&q^{-2(i+1)}E+[2(i+1)]_qY^{-1}X-\frac{q^{2(i+1)}+q^{-2i-1}-q-1}{(q-q^{-1})(q-1)}CY^{-2}.
\end{align*}

Hence, when  $b=q^{2i}, i\in\N$,  we have that $\Theta_b^{(Y)}(xy)=\Theta_b^{(Y)}(x)\Theta_b^{(Y)}(y)$ for any $x,y\in \S_q^{(Y)}$.
We can see that  if $p(x)$ is a Laurent polynomial such that $p(q^{2i})=0$ for all $i\in\N$, then the polynomial $p(x)$ is zero. Thus
$\Theta_b^{(Y)}(xy)=\Theta_b^{(Y)}(x)\Theta_b^{(Y)}(y)$ for any $b\in \mathbb{C}\setminus\{0\}$, $x,y\in \S_q^{(Y)}$. So $\Theta_b^{(Y)}$ is an automorphism of
$\S_q^{(Y)}$ for any $b\in \mathbb{C}^*$.
\end{proof}

\begin{proposition}
For $u\in\{F,Y\}$ and $x,y\in\C^*$, we have $\Theta_x^{(u)}\Theta_y^{(u)}=\Theta_{xy}^{(u)}$.
\end{proposition}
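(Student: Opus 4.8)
Since both $\Theta_x^{(u)}\Theta_y^{(u)}$ and $\Theta_{xy}^{(u)}$ are algebra automorphisms of $\S_q^{(u)}$ by the preceding proposition, it suffices to prove they agree on the generators $C$, $K^{\pm1}$, $X$, $E$ and $u^{\pm1}$. The plan is to reuse the two-step mechanism from the proof of the preceding proposition: first verify the identity when $x$ and $y$ are of the form $q^{2i}$, then extend to all of $\C^*\times\C^*$ by the elementary density argument for Laurent polynomials used there.

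For the first step, the computations in the proof of the preceding proposition show that, for $i\in\Z_+$, the automorphism $\Theta_{q^{2i}}^{(F)}$ agrees on generators (hence on all of $\S_q^{(F)}$) with the inner automorphism $a\mapsto F^{-i}aF^{i}$, and $\Theta_{q^{2i}}^{(Y)}$ with $a\mapsto Y^{-2i}aY^{2i}$ of $\S_q^{(Y)}$. Composing two such conjugations adds the exponents, so for all $i,j\in\Z_+$
\[
\Theta_{q^{2i}}^{(u)}\Theta_{q^{2j}}^{(u)}=\Theta_{q^{2(i+j)}}^{(u)}=\Theta_{q^{2i}\cdot q^{2j}}^{(u)} .
\]
In particular $\Theta_x^{(u)}\Theta_y^{(u)}(s)=\Theta_{xy}^{(u)}(s)$ for every generator $s$ and all $(x,y)\in\{q^{2i}:i\in\Z_+\}\times\{q^{2j}:j\in\Z_+\}$.

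For the second step, fix a generator $s$ and expand $\Theta_x^{(u)}\Theta_y^{(u)}(s)$ and $\Theta_{xy}^{(u)}(s)$ in a PBW-type basis of $\S_q^{(u)}$. From (4.1)--(4.6), applying $\Theta_y^{(u)}$ introduces only Laurent-polynomial $y$-dependence in the coefficients, and then applying $\Theta_x^{(u)}$ introduces only Laurent-polynomial $x$-dependence; re-collecting into the basis uses only the defining relations of $\S_q^{(u)}$, whose structure constants lie in $\C$ and are independent of $x$ and $y$. Hence each coordinate of $\Theta_x^{(u)}\Theta_y^{(u)}(s)-\Theta_{xy}^{(u)}(s)$ is a Laurent polynomial in $x$ and $y$ that vanishes on the product set of the first step. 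Since $q$ is not a root of unity, $\{q^{2i}:i\in\Z_+\}$ is infinite, so (exactly as in the proof of the preceding proposition) such a Laurent polynomial is identically zero. Therefore $\Theta_x^{(u)}\Theta_y^{(u)}$ and $\Theta_{xy}^{(u)}$ agree on every generator, hence $\Theta_x^{(u)}\Theta_y^{(u)}=\Theta_{xy}^{(u)}$.

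The main obstacle is not conceptual but the bookkeeping in the second step: making sure that no genuinely rational (as opposed to Laurent-polynomial) dependence on $x$ or $y$ is produced when the images of $X$ and, especially, $E$ are re-collected, which amounts to tracking the $F^{-1}$-, $Y^{-1}$- and $Y^{-2}$-terms carefully. If one prefers to avoid the density argument altogether, the generators $C$, $u^{\pm1}$ and $K^i$ are immediate ($K^i$ via $x^{-i}y^{-i}=(xy)^{-i}$), while $X$ and $E$ can be checked by hand from (4.1)--(4.6): for $X$ the cross term collapses via an identity of the form $y\cdot\tfrac{x-1}{q-1}+\tfrac{y-1}{q-1}=\tfrac{xy-1}{q-1}$, and for $E$ one compares the coefficients of $E$, of $u^{-1}X$ and of $Cu^{-2}$ (together with the $KF^{-1}$ and $K^{-1}F^{-1}$ terms when $u=F$) on the two sides --- elementary, if a little lengthy.
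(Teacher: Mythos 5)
Your primary route is correct but genuinely different from the paper's. The paper proves this proposition by the direct computation you sketch only at the end as an alternative: it explicitly evaluates $\Theta_x^{(u)}\Theta_y^{(u)}(X)$ and $\Theta_x^{(u)}\Theta_y^{(u)}(E)$ from formulas (4.1)--(4.6), noting that the remaining generators are trivial, and verifies the coefficient identities you mention, such as $x+x\frac{y-1}{q^2-1}-1=\frac{xy-1}{q^2-1}$ after rearrangement and the analogous one for the $Cu^{-2}$-coefficient of $E$. Your main argument instead recycles the two-step mechanism from the proof of the preceding proposition: identify $\Theta_{q^{2i}}^{(u)}$ with a conjugation automorphism (by $F^{-i}$ or $Y^{-2i}$), observe that composing conjugations adds exponents so the identity holds on $\{q^{2i}\}\times\{q^{2j}\}$, and then extend by the Laurent-polynomial rigidity argument (using that $q$ is not a root of unity, so $\{q^{2i}:i\in\Z_+\}$ is infinite, and separating variables to reduce the two-variable vanishing to two one-variable vanishings). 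This is sound, and it has the conceptual advantage of not requiring any new coefficient bookkeeping beyond what was already done for the preceding proposition; its cost is that the claim ``re-collecting into the PBW basis introduces only Laurent-polynomial dependence on $x$ and $y$'' needs a sentence of justification (the structure constants of $\S_q^{(u)}$ are $x,y$-independent, and each generator maps to a finite Laurent-polynomial combination of basis elements), which you do supply. The paper's direct computation is shorter and entirely self-contained; your density approach generalizes more readily and makes the role of the conjugation picture transparent. Both are valid.
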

\begin{proof}
 We only need to check $\Theta_x^{(u)}\Theta_y^{(u)}(X)=\Theta_{xy}^{(u)}(X)$ and $\Theta_x^{(u)}\Theta_y^{(u)}(E)=\Theta_{xy}^{(u)}(E)$ since the others are trivial. This is done by the following computations.
\begin{align*}
\Theta_x^{(F)}\Theta_y^{(F)}(X)&=\Theta_x^{(F)}(X-\frac{y-1}{q^2-1}F^{-1}YK^{-1})\\
&=X-\frac{x-1}{q^2-1}F^{-1}YK^{-1}-x\frac{y-1}{q^2-1}F^{-1}YK^{-1}\\
&=X-\frac{xy-1}{q^2-1}F^{-1}YK^{-1}\\
&=\Theta_{xy}^{(F)}(X),\\
\\
\Theta_x^{(F)}\Theta_y^{(F)}(E)=&\Theta_x^{(F)}(E+\frac{1-y^{-1}}{(q^2-1)(q-q^{-1})}KF^{-1}-\frac{1-y}{(q^{-2}-1)(q-q^{-1})}K^{-1}F^{-1})\\
=&E+\frac{1-x^{-1}}{(q^2-1)(q-q^{-1})}KF^{-1}-\frac{1-x}{(q^{-2}-1)(q-q^{-1})}K^{-1}F^{-1}\\
&+\frac{x^{-1}(1-y^{-1})}{(q^2-1)(q-q^{-1})}KF^{-1}-\frac{x(1-y)}{(q^{-2}-1)(q-q^{-1})}K^{-1}F^{-1}\\
=&E+\frac{1-x^{-1}y^{-1}}{(q^2-1)(q-q^{-1})}KF^{-1}-\frac{1-xy}{(q^{-2}-1)(q-q^{-1})}K^{-1}F^{-1}\\
=&\Theta_{xy}^{(F)}(E),\\
\\
\Theta_x^{(Y)}\Theta_y^{(Y)}(X)&=\Theta_x^{(Y)}\Big(yX-\frac{y-1}{q-1}CY^{-1}\Big)\\
&=y\Theta_x^{(Y)}(X)-\frac{y-1}{q-1}CY^{-1}\\
&=xyX-y\frac{x-1}{q-1}CY^{-1}-\frac{y-1}{q-1}CY^{-1}\\
&=xyX-\frac{xy-1}{q-1}CY^{-1}\\
&=\Theta_{xy}^{(Y)}(X),\\
\\
\Theta_x^{(Y)}\Theta_y^{(Y)}(E)=&\Theta_x^{(Y)}\Big(y^{-1}E+\frac{y-y^{-1}}{q-q^{-1}}Y^{-1}X-\frac{y+qy^{-1}-q-1}{(q-q^{-1})(q-1)}CY^{-2}\Big)\\
=&y^{-1}\Theta_x^{(Y)}(E)+\frac{y-y^{-1}}{q-q^{-1}}Y^{-1}\Theta_x^{(Y)}(X)-\frac{y+qy^{-1}-q-1}{(q-q^{-1})(q-1)}CY^{-2}\\
=&y^{-1}\Big(x^{-1}E+\frac{x-x^{-1}}{q-q^{-1}}Y^{-1}X-\frac{x+qx^{-1}-q-1}{(q-q^{-1})(q-1)}CY^{-2}\Big)\\
&+\frac{y-y^{-1}}{q-q^{-1}}Y^{-1}\Big(xX-\frac{x-1}{q-1}CY^{-1}\Big)-\frac{y+qy^{-1}-q-1}{(q-q^{-1})(q-1)}CY^{-2}\\
=&(xy)^{-1}E+\frac{xy-x^{-1}y^{-1}}{q-q^{-1}}Y^{-1}X-\frac{xy+qx^{-1}y^{-1}-q-1}{(q-q^{-1})(q-1)}CY^{-2}.\tag*{\qedhere}
\end{align*}
\end{proof}

Now we can define Mathieu's twisting functor in our situation. For $b\in\C^*, u\in\{F,Y\}$, the {\it twisting functor} $B_b^{(u)}:\S_q$-$\mathrm{Mod}\rightarrow\S_q$-$\mathrm{Mod}$ is defined as composition of the following functors:
\begin{enumerate}[(i)]
\item the induction functor $\mathrm{Ind}_{\S_q}^{\S_q^{(u)}}:=\S_q^{(u)}\otimes_{\S_q}-$;
\item twisting the $\S_q^{(u)}$-action by $\Theta_b^{(u)}$;
\item the restriction functor $\mathrm{Res}_{\S_q}^{\S_q^{(u)}}$.
\end{enumerate}

The following two results are similar as Lemma 10 and Proposition 11 in \cite{D}
\begin{lemma}
Let $u\in\{F,Y\}$. Let $V$ ba an $\S_q$-module on which $u$ acts bijectively and $W$ be an $\S_q^{(Y)}$-module. Then
\begin{enumerate}[(i)]
\item $\mathrm{Ind}_{\S_q}^{\S_q^{(u)}}\circ\mathrm{Res}_{\S_q}^{\S_q^{(u)}}(W)\cong W$.
\item $\mathrm{Res}_{\S_q}^{\S_q^{(u)}}\circ\mathrm{Ind}_{\S_q}^{\S_q^{(u)}}(V)\cong V$.
\end{enumerate}
\end{lemma}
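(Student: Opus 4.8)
The plan is to deduce both parts from a single elementary assertion about Ore localization: \emph{for any $\S_q$-module $V$ on which $u$ acts bijectively, the multiplication map}
\[
\mu_V\colon\S_q^{(u)}\otimes_{\S_q}V\longrightarrow V,\qquad s\otimes v\mapsto s\cdot v,
\]
\emph{is an isomorphism of $\S_q^{(u)}$-modules}, where $V$ carries the unique $\S_q^{(u)}$-module structure extending its $\S_q$-action (with $u^{-1}$ acting as the inverse of the action of $u$). Granting this, part (ii) is immediate: $\mathrm{Res}_{\S_q}^{\S_q^{(u)}}\!\circ\mathrm{Ind}_{\S_q}^{\S_q^{(u)}}(V)$ is by definition the underlying $\S_q$-module of $\S_q^{(u)}\otimes_{\S_q}V$, and $\mu_V$ identifies it with the underlying $\S_q$-module of $V$. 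For part (i), one applies the assertion to $V=\mathrm{Res}_{\S_q}^{\S_q^{(u)}}(W)$: since $W$ was an $\S_q^{(u)}$-module, $u$ indeed acts bijectively on $\mathrm{Res}(W)$, its canonical $\S_q^{(u)}$-extension is again $W$, and $\mu_W$ then exhibits $\mathrm{Ind}_{\S_q}^{\S_q^{(u)}}\!\circ\mathrm{Res}_{\S_q}^{\S_q^{(u)}}(W)\cong W$, this time as $\S_q^{(u)}$-modules.

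To prove the assertion I would first invoke the left Ore property of $\{u^i\mid i\in\Z_+\}$ established earlier: every element of $\S_q^{(u)}$ has the form $u^{-n}a$ with $n\in\Z_+$, $a\in\S_q$, so every element of $\S_q^{(u)}\otimes_{\S_q}V$ can be put over a common denominator, i.e.\ written as $u^{-n}\otimes v$ (given summands $u^{-n_i}a_i\otimes v_i$, set $n=\max_i n_i$ and rewrite $u^{-n_i}a_i\otimes v_i=u^{-n}\otimes u^{\,n-n_i}a_i v_i$, then add). The map $\mu_V$ is $\S_q^{(u)}$-linear and surjective since $\mu_V(1\otimes v)=v$. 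For a two-sided inverse consider $\eta_V\colon V\to\S_q^{(u)}\otimes_{\S_q}V$, $v\mapsto 1\otimes v$; then $\mu_V\circ\eta_V=\mathrm{id}_V$, and on a general element $u^{-n}\otimes v$ one computes
\[
\eta_V\bigl(\mu_V(u^{-n}\otimes v)\bigr)=1\otimes u^{-n}v=u^{-n}u^{n}\otimes u^{-n}v=u^{-n}\otimes u^{n}u^{-n}v=u^{-n}\otimes v,
\]
where the factor $u^{n}\in\S_q$ is moved across the tensor product and $u$ acts invertibly on $V$; since such elements span the tensor product, $\eta_V\circ\mu_V=\mathrm{id}$, so $\mu_V$ is bijective.

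I do not expect a genuine obstacle here; the one point that must be handled with care is the common-denominator reduction, which is precisely what legitimises the identity $\eta_V\circ\mu_V=\mathrm{id}$ on \emph{all} of $\S_q^{(u)}\otimes_{\S_q}V$ rather than only on decomposable tensors, and which rests squarely on the (left) Ore condition for $\{u^i\}$. An alternative route is to cite flatness of $\S_q^{(u)}$ as a right $\S_q$-module together with $\S_q^{(u)}\otimes_{\S_q}\S_q^{(u)}\cong\S_q^{(u)}$, but the explicit inverse above is more transparent and mirrors the proofs of Lemma 10 and Proposition 11 in \cite{D}.
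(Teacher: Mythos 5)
The paper does not actually prove this lemma; it simply states that the two results are analogues of Lemma 10 and Proposition 11 of Dubsky's paper \cite{D} on the (non-quantum) Schr\"odinger algebra. Your argument is correct and is exactly the standard proof one would supply: the key observation that the multiplication map $\mu_V\colon\S_q^{(u)}\otimes_{\S_q}V\to V$ is an isomorphism whenever $u$ acts bijectively (using the common-denominator reduction afforded by the left Ore property, and the explicit inverse $v\mapsto 1\otimes v$), from which both (i) and (ii) follow by specialization. This is essentially the same route taken in \cite{D} and in Mathieu's original treatment, so there is nothing substantively different from what the paper tacitly intends.
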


\begin{proposition}
For $x,y\in\C^*, u\in\{F,Y\}$, we have $B_x^{(u)}\circ B_y^{(u)}\cong B_{xy}^{(u)}$.
\end{proposition}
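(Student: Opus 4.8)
The plan is to deduce the natural isomorphism $B_x^{(u)}\circ B_y^{(u)}\cong B_{xy}^{(u)}$ from the already-established semigroup law $\Theta_x^{(u)}\Theta_y^{(u)}=\Theta_{xy}^{(u)}$ on the localized algebra $\S_q^{(u)}$, together with the adjunction/inverse-equivalence facts recorded in the preceding Lemma. The key observation is that on the localization $\S_q^{(u)}$ the element $u$ already acts bijectively, so the functors $\mathrm{Ind}_{\S_q}^{\S_q^{(u)}}$ and $\mathrm{Res}_{\S_q}^{\S_q^{(u)}}$ restrict to mutually inverse equivalences between $\S_q^{(u)}$-$\mathrm{Mod}$ and the full subcategory of $\S_q$-$\mathrm{Mod}$ on which $u$ acts bijectively (this is exactly part of the cited Lemma, applied in both directions). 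Denote the twist-of-action functor on $\S_q^{(u)}$-$\mathrm{Mod}$ by $T_b$, i.e. $T_b(M)$ is $M$ with the $\S_q^{(u)}$-action precomposed with $\Theta_b^{(u)}$; then $T_x\circ T_y = T_{xy}$ on the nose, directly from $\Theta_x^{(u)}\Theta_y^{(u)}=\Theta_{xy}^{(u)}$, and $B_b^{(u)} = \mathrm{Res}\circ T_b\circ\mathrm{Ind}$ by definition.

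First I would unwind $B_x^{(u)}\circ B_y^{(u)} = \mathrm{Res}\circ T_x\circ\mathrm{Ind}\circ\mathrm{Res}\circ T_y\circ\mathrm{Ind}$ and insert, in the middle, the natural isomorphism $\mathrm{Ind}\circ\mathrm{Res}\cong\mathrm{Id}$ on $\S_q^{(u)}$-$\mathrm{Mod}$ from part (i) of the Lemma. Crucially, to apply that part of the Lemma to $T_y\circ\mathrm{Ind}(V)$ I need $u$ to act bijectively on the underlying module, and I would check this at the level of $\S_q^{(u)}$-modules: $u$ is invertible in $\S_q^{(u)}$, and $\Theta_y^{(u)}(u)=u$ (it fixes $F^{\pm1}$, resp. $Y^{\pm1}$), so twisting by $\Theta_y^{(u)}$ does not disturb bijectivity of the $u$-action; the underlying module of $\mathrm{Ind}(V)$ is an $\S_q^{(u)}$-module hence already has $u$ acting invertibly. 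After cancelling the middle $\mathrm{Ind}\circ\mathrm{Res}$ we are left with $\mathrm{Res}\circ T_x\circ T_y\circ\mathrm{Ind}\cong\mathrm{Res}\circ T_{xy}\circ\mathrm{Ind} = B_{xy}^{(u)}$, using $T_x\circ T_y=T_{xy}$.

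The one point requiring genuine care — the main obstacle — is naturality: I want an isomorphism of functors, not merely a pointwise isomorphism, so I must verify that the isomorphism $\mathrm{Ind}\circ\mathrm{Res}\cong\mathrm{Id}$ supplied by the Lemma is natural and that sandwiching it between the (additive, exact) functors $T_x\circ\mathrm{Ind}$ on the left and $T_y\circ\mathrm{Ind}$ (no: $\mathrm{Res}\circ T_x$ on the left, $T_y\circ\mathrm{Ind}$ on the right) preserves naturality — which it does automatically, since whiskering a natural isomorphism by functors yields a natural isomorphism. I would also remark that $T_b$ is an equivalence with inverse $T_{b^{-1}}$, so no exactness issues arise. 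Thus the composite natural isomorphism $B_x^{(u)}\circ B_y^{(u)}\cong B_{xy}^{(u)}$ is obtained by concatenating: (a) $T_x\circ T_y = T_{xy}$ from the semigroup law on $\Theta$; (b) the natural isomorphism $\mathrm{Ind}\circ\mathrm{Res}\cong\mathrm{Id}$ from the Lemma, whiskered appropriately; and nothing else is needed. The remaining verifications are the bookkeeping of which functor is applied where, and the trivial check $\Theta_b^{(u)}(u)=u$.
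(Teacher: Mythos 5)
Your argument is correct and is the natural one; the paper itself does not write out a proof of this proposition but simply defers to Lemma 10 and Proposition 11 of \cite{D}, and your decomposition $B_b^{(u)}=\mathrm{Res}\circ T_b\circ\mathrm{Ind}$ followed by cancelling the interior $\mathrm{Ind}\circ\mathrm{Res}$ via the preceding lemma and invoking $\Theta_x^{(u)}\Theta_y^{(u)}=\Theta_{xy}^{(u)}$ (together with commutativity of $\C^*$, so the order of composition of the twists is immaterial) is exactly the intended route. One small imprecision worth flagging: you single out bijectivity of $u$ on $T_y\circ\mathrm{Ind}(V)$ as the crucial hypothesis, but the step actually invokes part (i) of the lemma, whose sole requirement is that the argument be an $\S_q^{(u)}$-module --- which $T_y\circ\mathrm{Ind}(V)$ automatically is --- so the check $\Theta_y^{(u)}(u)=u$, while true, is not needed.
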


\begin{proposition}\label{twist}
Let $L$ be a simple $\S_q$-module and $u\in\{F,Y\}$. Then $B_b^{(u)}(L)$ is a simple $\S_q$-module for any $b\in\C^*$.
\end{proposition}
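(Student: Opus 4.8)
The plan is to reduce the simplicity of $B_b^{(u)}(L)$ to the simplicity of $L$ by exploiting the fact that the twisting functor is essentially invertible on a well-chosen category of modules. First I would observe that $B_b^{(u)}$ factors through modules on which $u$ acts bijectively: by construction $\mathrm{Ind}_{\S_q}^{\S_q^{(u)}}(L) = \S_q^{(u)}\otimes_{\S_q}L$ is a module over the localization $\S_q^{(u)}$, so $u$ acts invertibly on it; twisting by the automorphism $\Theta_b^{(u)}$ (which fixes $u$ up to the identity, since $\Theta_b^{(u)}(u)=u$) preserves this property; and restriction back to $\S_q$ keeps $u$ bijective. So $B_b^{(u)}(L)$ lies in the full subcategory $\mathcal{C}_u$ of $\S_q$-modules on which $u$ acts bijectively, and on this subcategory the functor $\mathrm{Res}_{\S_q}^{\S_q^{(u)}}\circ\mathrm{Ind}_{\S_q}^{\S_q^{(u)}}$ is isomorphic to the identity by the preceding lemma.

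Next I would split into two cases according to whether $u$ acts bijectively on $L$ itself. If $u$ does \emph{not} act injectively on $L$, then by Lemma~\ref{lonil} $u$ acts locally nilpotently on the simple module $L$; since $u$ also acts invertibly on $\S_q^{(u)}\otimes_{\S_q}L$, every element of $\S_q^{(u)}\otimes_{\S_q}L$ is killed by a high enough power of $u$ coming from $L$, forcing $\S_q^{(u)}\otimes_{\S_q}L = 0$ (here one uses that $L$ is generated over $\S_q$ by any nonzero vector, and that such a vector is annihilated by a power of $u$, so its image in the localization is $0$). Hence $B_b^{(u)}(L)=0$, which is vacuously simple, or — depending on the paper's convention — this case is excluded; in any event the statement holds trivially. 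The substantive case is when $u$ acts bijectively on $L$, i.e.\ $L\in\mathcal{C}_u$.

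So assume $L\in\mathcal{C}_u$. I claim $B_b^{(u)}$ restricts to an autoequivalence of $\mathcal{C}_u$ with quasi-inverse $B_{b^{-1}}^{(u)}$. Indeed, on $\mathcal{C}_u$ the induction and restriction functors $\mathrm{Ind}_{\S_q}^{\S_q^{(u)}}$ and $\mathrm{Res}_{\S_q}^{\S_q^{(u)}}$ are mutually inverse equivalences between $\mathcal{C}_u$ and the category of $\S_q^{(u)}$-modules (by the lemma, both composites are naturally isomorphic to the identity), and twisting by the algebra automorphism $\Theta_b^{(u)}$ is an automorphism of the category of $\S_q^{(u)}$-modules. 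Composing, $B_b^{(u)}$ is an equivalence $\mathcal{C}_u\to\mathcal{C}_u$; by Proposition~\ref{twist}'s companion ($B_x^{(u)}\circ B_y^{(u)}\cong B_{xy}^{(u)}$) together with $B_1^{(u)}\cong\mathrm{id}$, its quasi-inverse is $B_{b^{-1}}^{(u)}$. An equivalence of categories sends simple objects to simple objects: any proper nonzero submodule of $B_b^{(u)}(L)$ would, under the quasi-inverse $B_{b^{-1}}^{(u)}$, produce a proper nonzero submodule of $B_{b^{-1}}^{(u)}B_b^{(u)}(L)\cong L$, contradicting simplicity of $L$. Therefore $B_b^{(u)}(L)$ is simple.

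The main obstacle I anticipate is the bookkeeping in the degenerate case: one must be careful that ``$B_b^{(u)}(L)$ is simple'' is interpreted correctly when $\S_q^{(u)}\otimes_{\S_q}L$ vanishes, and more importantly one should verify that the two mutually-inverse natural isomorphisms of the lemma are genuinely compatible with the $\Theta_b^{(u)}$-twist — that is, that twisting commutes appropriately with induction and restriction so that $B_{b^{-1}}^{(u)}\circ B_b^{(u)}$ is really the identity and not merely an equivalence. Once the functor $B_b^{(u)}$ is recognized as an equivalence on $\mathcal{C}_u$, the simplicity statement is immediate.
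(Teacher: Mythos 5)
Your core argument is exactly the paper's: a nonzero proper submodule $W\subseteq B_b^{(u)}(L)$ would, under the quasi-inverse $B_{b^{-1}}^{(u)}$, yield a nonzero proper submodule of $B_{b^{-1}}^{(u)}B_b^{(u)}(L)\cong L$, contradicting simplicity. The paper proves this in one line; your extra scaffolding about $\mathcal{C}_u$, equivalences, and the degenerate case is not in the paper but correctly identifies that the isomorphism $B_{b^{-1}}^{(u)}B_b^{(u)}(L)\cong L$ (from Lemma 4.3 and Proposition 4.4) is the real engine.

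However, your case analysis has a genuine gap. You split into ``$u$ not injective on $L$'' and ``$u$ bijective on $L$,'' but by Lemma~\ref{lonil} the negation of local nilpotence is only \emph{injectivity}, not bijectivity; for infinite-dimensional simple modules these differ. Concretely, for a simple highest weight module $N(\lambda,z)$, $Y$ is injective but not surjective (there is no $v_{-1,0}$), and this is the middle case you skip. In that case your key identity $B_{b^{-1}}^{(u)}B_b^{(u)}(L)\cong L$ does not hold: one gets $B_1^{(u)}(L)$, the localization, which strictly contains $L$. In particular, for $b=1$ the proposition as stated \emph{fails} — $B_1^{(Y)}(N(\lambda,z))$ contains $N(\lambda,z)$ as a proper nonzero submodule, so it is not simple. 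The paper's own one-line proof carries the same hidden hypothesis (it silently uses Lemma 4.3(ii), which requires $u$ bijective on $L$), and the proposition is in fact only invoked in Theorem 5.6 where $L\in\cN$ and Corollary~\ref{bij} guarantees bijectivity. So your proof of the substantive case is fine and matches the paper; but you should state explicitly that the argument requires $u$ to act bijectively on $L$ rather than suggest the injective-but-not-surjective case does not occur, and you should not describe the zero module as ``vacuously simple.''
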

\begin{proof}
This follows from the fact that if $W\subseteq B_b^{(u)}(L)$ is a submodule, then $B_{b^{-1}}^{(u)}(W)\subseteq B_{b^{-1}}^{(u)}(B_b^{(u)}(L))\cong L$ is a submodule.
\end{proof}

\section{Classification of simple modules}

In this section, we will clssify all simple weight $\S_q$-modules with finite dimensional weight spaces. First, we have
\begin{lemma}\label{inj}
Let $V$ be a simple weight $\S_q$-module with finite dimensional weight spaces.
\begin{enumerate}[(i)]
\item If $E$ acts locally nilpotently on $V$, then $V$ is a highest weight module. If $F$ acts locally nilpotently on $V$, then $V$ is a lowest weight module.
\item Suppose in addition that $V$ has nonzero central charge. If $X$ acts locally nilpotently on $V$, then $V$ is a highest weight module. If $Y$ acts locally nilpotently on $V$, then $V$ is a lowest weight module.
\end{enumerate}
\end{lemma}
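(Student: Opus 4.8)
\textbf{Proof proposal for Lemma \ref{inj}.}

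The plan is to prove (i) and (ii) in parallel, exploiting the commutation relations that link $E$ with $X$ and $F$ with $Y$. For part (i), suppose $E$ acts locally nilpotently on $V$. Pick any nonzero weight vector $v\in V_\lambda$ with $Ev=0$. I want to produce a weight vector annihilated by both $E$ and $X$; once I have such a vector $w$, the submodule generated by $w$ is a highest weight submodule, and simplicity of $V$ forces $V$ itself to be a highest weight module. The key observation is that $EX=qXE$ (relation (\ref{R3})), so $X$ maps $\ker E$ into $\ker E$; moreover $X$ lowers the $K$-eigenvalue by a factor of $q$ (from $KXK^{-1}=qX$), so on the finite-dimensional space $\bigoplus_{i\le 0}(\ker E)_{\lambda q^i}$ the operator $X$ cannot be injective — iterating $X$ on $v$ must eventually hit zero since the weight spaces are finite-dimensional and the weights $\lambda q^i$ are distinct. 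Hence some $X^m v\ne 0$ with $X^{m+1}v=0$, and $w:=X^m v$ satisfies $Ew=Xw=0$, which is exactly a highest weight vector. The statement for $F$ locally nilpotent is entirely symmetric, using $FX = YK^{-1}+XF$ — actually here I would instead use the relation pairing $F$ with $Y$: note $FY=YF$ from (\ref{R4}), so $Y$ preserves $\ker F$, and $Y$ also shifts weights; combined with the fact (Lemma \ref{lonil}) that $Y$ then acts locally nilpotently whenever it fails to be injective, one extracts a vector killed by both $F$ and $Y$, i.e. a lowest weight vector. One should double-check which of $X,Y$ naturally pairs with $F$ to stay inside $\ker F$; the clean statement is that $\ker F$ is $Y$-stable since $[F,Y]=0$.

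For part (ii), assume $V$ has nonzero central charge $z$ and $X$ acts locally nilpotently. By Lemma \ref{lonil} this is automatic once $X$ is not injective, so the content is: from local nilpotence of $X$ I must reach a highest weight module. Take a nonzero weight vector $v$ with $Xv=0$. The relation $qYX-XY=C$ gives, on $v$, that $XYv = -zv \ne 0$, so in particular $Yv\ne 0$ and more generally the argument of Theorem \ref{wdim} shows $X$ and $Y$ interact through $C$ in a way that, combined with finite-dimensional weight spaces, forces $E$ to also be locally nilpotent: indeed the chain of dimension inequalities there shows that if $X$ is locally nilpotent and $V$ is not highest weight then $Y$ is locally nilpotent too, and then $qYX-XY=C$ produces a vector killed by both $X$ and $Y$, giving $XV=YV=0$ and a contradiction with nonzero central charge. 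So in fact local nilpotence of $X$ together with the non-highest-weight hypothesis is impossible; equivalently, $V$ must be a highest weight module. Concretely I would argue: local nilpotence of $X$ $\Rightarrow$ ($\dim V_{\lambda q^i}$ eventually drops as $i\to -\infty$ along some chain) $\Rightarrow$ using $EX=qXE$ and finite-dimensionality, $E$ is not injective on $V$, hence by part (i) $V$ is highest weight. The symmetric statement for $Y$ locally nilpotent $\Rightarrow$ $V$ lowest weight follows the same way using $FY=YF$ and part (i).

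The main obstacle is the bookkeeping in part (ii): one has to be careful that "X acts locally nilpotently" really forces non-injectivity of $E$, and this needs the finiteness of weight-space dimensions plus the precise weight shifts ($X$ divides the weight by $q$, $E$ divides by $q^2$), mirroring the dimension-comparison argument already run in the proof of Theorem \ref{wdim}. I expect the cleanest write-up to reduce (ii) to (i) by producing, inside $\ker X$ restricted to a suitable half-line of weights, a vector also annihilated by $E$ — using that $\ker X$ is $E$-stable since $EX = qXE$ — and then invoking that a vector killed by both $E$ and $X$ generates a highest weight submodule, whence $V$ is highest weight by simplicity. Throughout, the nonzero central charge is used exactly to rule out the degenerate outcome $XV=YV=0$ via the relation $qYX-XY=C$.
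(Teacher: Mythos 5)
Your proposal takes a genuinely different route from the paper's, but it contains a gap that affects both parts. In part (i) the crucial step is: starting from $v\in\ker E$ and using that $\ker E$ is $X$-stable (because $EX=qXE$), iterate $X$ to land on a vector killed by both $E$ and $X$. You justify this by saying $X$ must eventually annihilate $v$ ``since the weight spaces are finite-dimensional and the weights $\lambda q^i$ are distinct.'' That reasoning does not hold: the vectors $X^iv$ lie in \emph{distinct} weight spaces $V_{\lambda q^i}$ (and note $X$ \emph{raises} the $K$-eigenvalue by a factor of $q$, not lowers it, since $KXK^{-1}=qX$), so having all of them nonzero is perfectly compatible with each weight space being finite-dimensional; the space $\bigoplus_i(\ker E)_{\lambda q^i}$ is not finite-dimensional in general. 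Nothing in your argument forces $X$ to fail injectivity on $\ker E$, and the analogous argument with $F$ and $Y$ has the same hole.

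The same vagueness undermines part (ii). You assert that local nilpotence of $X$ together with finite-dimensional weight spaces forces $E$ to be non-injective, appealing to ``the argument of Theorem \ref{wdim}'' --- but that theorem only shows that the weight spaces of a module in $\cN$ all have the same dimension; it does not establish your implication. The paper's actual proof of (ii) is a concrete construction: take $v$ with $Xv=0$, assume $E$ acts injectively (else done by part (i)), set $v_i:=E^iv$ (so $Xv_i=0$ by $EX=qXE$), show $Y$ acts injectively using $qYX-XY=C$ and $z\neq0$, and then use the explicit commutation identity for $X^{2k}Y^{2i}$ to prove that $\{Y^{2i}v_i\}_{i\in\Z_+}$ is an infinite linearly independent set inside the single weight space $V_\lambda$, contradicting finite-dimensionality. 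This construction is the real content of (ii) and is absent from your sketch. The paper's proof of (i) is likewise different from yours: it shows that under your hypotheses $V$ lies in $\cN$, invokes Theorem \ref{wdim} to see that all weight spaces of $V$ have the same dimension so that $V$ has finite length over $U_q(\mathfrak{sl}_2)$, and then uses the fact that the only simple weight $U_q(\mathfrak{sl}_2)$-modules on which $E$ acts locally nilpotently are highest weight modules, so $V$ is filtered by such and therefore has a highest weight after all --- a contradiction.
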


\begin{proof}
(i) We only need to prove the claim for the element $E$, the other case is similar. Suppose $E$ acts on $V$ locally nilpotently and $V$ is not a highest weight module, then $V$ is not a lowest weight module either, for otherwise $E$ and $F$ would both act locally nilpotently on $V$ and hence $V$ is a direct sum of finite dimensional modules when restricted to $U_q(\mathfrak{sl}_2)$. Since $V$ has finite dimensional weight spaces, $V$ is finite dimensional, and hence a highest weight module.

Therefore, $V$ is either a dense $U_q(\mathfrak{sl}_2)$-module or is in $\cN$. However, $E$ does not act locally nilpotently on simple dense $U_q(\mathfrak{sl}_2)$-modules, so $V$ is in $\cN$. By Theorem \ref{wdim}, we have $\mathrm{supp}(V)=\lambda q^{\Z}$ for some $\lambda\in\C^*$ and all nonzero weight spaces of $V$ have the same dimension. So $V$ has finite length as a $U_q(\mathfrak{sl}_2)$-module. The only simple weight $U_q(\mathfrak{sl}_2)$-modules on which $E$ acts locally nilpotently are highest weight modules, therefore, as a $U_q(\mathfrak{sl}_2)$-module, $V$ has a finite filtration with subquotients being highest weight modules. Therefore, $V$ must have a highest weight, a contradiction. This proves claim (i).

(ii) Again we only need to consider the claim for the element $X$. Take any nonzero weight vector $v$ with weight $\lambda$ such that $Xv=0$. By claim (i) we may assume that $E$ acts injectively on $V$. Since $EX=qXE$, we must have for any $i\in\Z_+$, $v_i=E^iv\neq0$ and $Xv_i=0$.

We claim that the action of $Y$ on $V$ is injective for otherwise it would be locally nilpotent and then $Y^kv=0$ for some $k\in\Z_+$ while $Y^{k-1}v\neq0$. Thus, we have
\[
0=XY^kv=-z\frac{q^k-1}{q-1}Y^{k-1}v\neq0,
\]
which is a contradiction.

Finally, we show that $\{Y^{2i}v_i|i\in\Z_+\}$ is an infinite set of linearly independent elements. First, \[X^{2k}Y^{2i}v_i=\sum\limits_{t=1}^{\min(2k,2i)}(-1)^tq^{4ki+t}\prod\limits_{j=0}^{t-1}\frac{(1-q^{-2k+j})(1-q^{-2i+j})}{(q-1)(q^{j+1}-1)}C^tY^{2i-t}X^{2k-t}v_i\]
tells us that $X^{2k}Y^{2i}v_i=0$ only if $k>i$. Hence, $Y^{2i}v_i\neq Y^{2j}v_j$ whenever $i\neq j$. Now suppose $\sum\limits_{i=0}^ka_iY^{2i}v_i=0$, then we have
\[
0=X^{2k}(\sum\limits_{i=0}^ka_iY^{2i}v_i)=a_k(-z)^{2k}q^{4k^2+2k}\prod\limits_{j=0}^{2k-1}\frac{(1-q^{-2k+j})(1-q^{-2k+j})}{(q-1)(q^{j+1}-1)}v_k,
\]
which is impossible. Hence, we have infinitely linearly many independent weight vectors of weight $\lambda$, a contradiction.
\end{proof}

\begin{corollary}\label{bij}
Let $L\in\cN$. Then both $E$ and $F$ acts bijectively on $L$. If in addition $L$ has nonzero central charge, then also both $X$ and $Y$ acts bijectively on $L$.
\end{corollary}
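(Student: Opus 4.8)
The plan is to deduce everything directly from Lemma \ref{lonil}, Lemma \ref{inj}, and the fact that membership in $\cN$ excludes highest- and lowest-weight modules. First consider $E$. If $E$ does not act injectively on $L$, then by Lemma \ref{lonil} (applied with $s=E$) it acts locally nilpotently, and then by Lemma \ref{inj}(i) $L$ is a highest weight module, contradicting $L\in\cN$. Hence $E$ acts injectively. To upgrade injectivity to bijectivity, I would invoke Theorem \ref{wdim}: since $L\in\cN$, we have $\operatorname{supp}(L)=\lambda q^{\Z}$ and all weight spaces $L_{\lambda q^{i}}$ have the same finite dimension $N$. Because $KEK^{-1}=q^{2}E$, the map $E$ sends $L_{\lambda q^{i}}$ into $L_{\lambda q^{i+2}}$, a space of the same finite dimension $N$; an injective linear map between finite-dimensional spaces of equal dimension is onto, so $E\colon L_{\lambda q^{i}}\to L_{\lambda q^{i+2}}$ is bijective for every $i$, and therefore $E$ is bijective on $L$. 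The argument for $F$ is identical, using Lemma \ref{inj}(i) for $F$, the relation $KFK^{-1}=q^{-2}F$, and the equal-dimension observation from Theorem \ref{wdim}.

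Now suppose in addition that $L$ has nonzero central charge $z$. For $X$: if $X$ is not injective on $L$, Lemma \ref{lonil} (with $s=X$) gives that $X$ acts locally nilpotently, and then Lemma \ref{inj}(ii) (which requires exactly the nonzero central charge hypothesis) forces $L$ to be a highest weight module, again contradicting $L\in\cN$. So $X$ is injective. Bijectivity then follows exactly as before: $KXK^{-1}=qX$ means $X$ maps $L_{\lambda q^{i}}$ into $L_{\lambda q^{i+1}}$, these spaces have equal finite dimension by Theorem \ref{wdim}, so the injective map $X$ restricted to each weight space is bijective, whence $X$ is bijective on $L$. The case $s=Y$ is symmetric, using $KYK^{-1}=q^{-1}Y$ and Lemma \ref{inj}(ii) for $Y$ to rule out $L$ being a lowest weight module.

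I do not anticipate a genuine obstacle here: the corollary is essentially an immediate repackaging of the three preceding results. The one point requiring a little care is the passage from injective to bijective, which is not automatic on an infinite-dimensional module and genuinely uses Theorem \ref{wdim} — specifically that all weight spaces have the same (finite) dimension and that $\operatorname{supp}(L)$ is a full coset $\lambda q^{\Z}$, so that each of $E,F,X,Y$ acts as a linear isomorphism between consecutive weight spaces rather than merely an injection into a possibly larger space. Everything else is a direct citation of Lemma \ref{lonil} and Lemma \ref{inj} together with the defining property that modules in $\cN$ have neither a highest nor a lowest weight.
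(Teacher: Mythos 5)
Your proposal is correct and follows essentially the same route as the paper: rule out local nilpotence via Lemma \ref{inj} (because $L\in\cN$ has neither highest nor lowest weight), conclude injectivity via Lemma \ref{lonil}, and then upgrade to bijectivity using Theorem \ref{wdim}'s equal finite weight-space dimensions. The only cosmetic difference is that you spell out the contrapositive chain element by element, whereas the paper states it once for all four generators.
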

\begin{proof}
From Lemma \ref{inj} it follows that the actions of $E,F,X$ and $Y$ on $L$ are not locally nilpotent, hence are   injective. By Theorem \ref{wdim}, these actions restrict to injective actions
between finite-dimensional vector spaces of the same dimension. Therefore they all are bijective.
\end{proof}

\begin{lemma}\label{hwv}
Let $L\in\cN$.
\begin{enumerate}[(i)]
\item There exists $b\in\C^*$ and $0\neq v\in B_b^{(F)}(L)$ such that $Ev=0$.
\item Assume that $C L\neq 0$. Then there exists $b\in\C^*$ and $0\neq v\in B_b^{(Y)}(L)$ such that $Xv=0$.
\end{enumerate}
\end{lemma}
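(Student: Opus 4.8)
The plan is to exploit the fact, established in Corollary \ref{bij}, that $F$ (and, in the case $CL\neq0$, also $Y$) acts bijectively on $L$, so that the twisting functor $B_b^{(F)}$ (resp. $B_b^{(Y)}$) is well-behaved on $L$: by Lemma \ref{twist} each $B_b^{(F)}(L)$ is again a simple $\S_q$-module, and it lies in the same weight orbit $\lambda q^{\Z}$ but with the weight shifted, since $\Theta_b^{(F)}(K^i)=b^{-i}K^i$. Concretely I would identify the underlying vector space of $B_b^{(F)}(L)$ with $L$ itself (using part (i) of the localization lemma, $\mathrm{Res}\circ\mathrm{Ind}(L)\cong L$), and then write down the twisted action of $E$ on a weight vector. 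Using the formula $\Theta_b^{(F)}(E)=E+\frac{1-b^{-1}}{(q^2-1)(q-q^{-1})}KF^{-1}-\frac{1-b}{(q^{-2}-1)(q-q^{-1})}K^{-1}F^{-1}$, the new action of $E$ on a weight vector $v\in L_{\mu}$ (where $\mu\in\lambda q^{\Z}$) becomes $E\cdot v + \big(c_1(b)\mu + c_2(b)\mu^{-1}\big)F^{-1}v$ for explicit scalars $c_1(b),c_2(b)$ depending on $b$ and $\mu$.

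The key step is then a dimension/finiteness argument on a single weight space, exactly in the spirit of \cite{M} and \cite{D}. Fix a weight $\mu$ and consider the (finite-dimensional, by hypothesis on $L$) space $L_{\mu}$. On $L_\mu$, the operator $F^{-1}E$ is a well-defined endomorphism (since $F$ is bijective and $F\colon L_{\mu q^{-2}}\to L_{\mu}$), and it has some eigenvector $w\in L_{\mu}$ with eigenvalue $\nu$, say, possibly after replacing $\mu$ by a weight where $L_\mu\neq 0$. I would like to choose $b$ so that $c_1(b)\mu+c_2(b)\mu^{-1}=-\nu$; since $c_1,c_2$ are nonzero multiples of $1-b^{-1}$ and $1-b$ respectively, the equation $c_1(b)\mu+c_2(b)\mu^{-1}+\nu=0$ is (after clearing $b$) a nontrivial polynomial equation in $b$ of degree $\le 2$, hence has a solution $b\in\C^*$ provided the equation is not identically degenerate; one checks $b=1$ is excluded (it gives $\nu=0$) but a genuine root exists. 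For this $b$, the twisted $E$ kills $w$, i.e. $0\neq w\in B_b^{(F)}(L)$ with $Ew=0$, proving (i). For (ii), the argument is identical with $F$ replaced by $Y$ and $E$ replaced by $X$: when $CL\neq0$, Corollary \ref{bij} gives that $Y$ is bijective on $L$, so $Y^{-1}X$ is a well-defined endomorphism of each weight space, and $\Theta_b^{(Y)}(X)=bX-\frac{b-1}{q-1}CY^{-1}$ shows the twisted action of $X$ on $v\in L_\mu$ is $bX v - \frac{b-1}{q-1}z Y^{-1}v$; choosing $b$ to cancel an eigenvalue of $Y^{-1}X$ (here the coefficient is linear in $b$, so the solvability is even easier) produces the desired vector killed by $X$.

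The main obstacle I anticipate is checking that the scalar equation in $b$ actually admits a solution in $\C^*$ — that is, ruling out the degenerate case where the only root would be $b=0$ or where the eigenvalue $\nu$ forces no solution. This requires being slightly careful: if the chosen eigenvalue $\nu$ happens to be $0$, then $b=1$ works trivially (the twisting is the identity and $E$ already kills an eigenvector of $F^{-1}E$ with eigenvalue $0$, i.e. $Ew=0$ in $L$ itself — but then $L$ would be a highest weight module, contradicting $L\in\cN$, so in fact $\nu\neq 0$ always). Hence $\nu\neq 0$, and then the polynomial $c_1(b)\mu+c_2(b)\mu^{-1}+\nu$, once multiplied by $b$ to clear the $b^{-1}$ term, is a genuine quadratic (or linear) in $b$ with nonzero constant term, so it has a root in $\C^*$. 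A second, more bookkeeping-type obstacle is being careful about which weight space one works in and ensuring the identification of $B_b^{(F)}(L)$ with $L$ as a vector space is done cleanly via the localization lemma so that ``$Ev=0$ in $B_b^{(F)}(L)$'' unambiguously means the twisted operator annihilates the element; this is routine but worth stating explicitly.
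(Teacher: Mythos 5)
Your part (i) is essentially the paper's argument, up to a small bookkeeping error that is easy to repair: the operator $F^{-1}E$ is \emph{not} an endomorphism of a single weight space ($E$ sends $L_\mu$ to $L_{\mu q^2}$ and $F^{-1}$ sends $L_{\mu q^2}$ to $L_{\mu q^4}$, so $F^{-1}E\colon L_\mu\to L_{\mu q^4}$); one should instead take an eigenvector $v$ of the genuine endomorphism $EF$ of $L_\mu$ and evaluate the twisted $E$ on $Fv$, which is exactly what the paper does. After multiplying the scalar equation by $b$, its constant term is a nonzero multiple of $\mu$ (independent of the eigenvalue $\nu=a$), so the quadratic always has a root in $\C^*$; your worry about $\nu=0$ is therefore moot, though your observation that $\nu\neq 0$ (otherwise $Ew=0$ in $L$, making $L$ a highest weight module) is correct.

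Part (ii) is where there is a genuine gap, and it is precisely at the point you wave away. You write that because $\Theta_b^{(Y)}(X)=bX-\frac{b-1}{q-1}CY^{-1}$ is linear in $b$, ``the solvability is even easier''; in fact the opposite is true. Using an eigenvector $v\in L_\mu$ of $XY$ with eigenvalue $a_1$ (again, $Y^{-1}X$ shifts weights and is not an endomorphism of $L_\mu$), the condition $\Theta_b^{(Y)}(X)(Yv)=0$ becomes
\[
b\Bigl(a_1-\frac{c}{q-1}\Bigr)=-\frac{c}{q-1},
\]
where $c\neq 0$ is the central charge. The right-hand side is nonzero, so a solution $b\in\C^*$ exists if and only if $a_1\neq\frac{c}{q-1}$. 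Unlike the quadratic case in (i), there is no fallback here: if every eigenvalue of $XY$ on every weight space equals $\frac{c}{q-1}$, the argument collapses. Excluding this degenerate case is the entire substance of the paper's proof of (ii): one puts $XY|_{L_\lambda}$ in Jordan form, writes $Ev_i=\sum_j a_{ij}X^2v_j$ in the basis $\{X^2v_j\}$ of $L_{\lambda q^2}$, computes $XYEv_i$ in two ways, and from the resulting recursion on $\epsilon_i a_{i-1,i}$ derives $q^{2m}=1$ (with $m=\dim L_\lambda$), contradicting that $q$ is not a root of unity. Your proposal contains none of this and does not notice that anything needs to be proved; as written, it would allow the case $a_1\equiv\frac{c}{q-1}$ and the construction of the desired vector would simply fail. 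That is the missing idea.
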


\begin{proof}
(i) By Corollary \ref{bij}, both $E$ and $F$ act bijectively on $L$, and hence $EF: L_\lambda\rightarrow L_\lambda$ is bijective for any $\lambda\in\mathrm{supp}(L)$. So there exists $v\in L_\lambda$ such that $EFv=av$ for some $a\in\C^*$.

Since
\begin{align*}
&\Theta_b^{(F)}(E)Fv\\
=&(E+\frac{1-b^{-1}}{(q^2-1)(q-q^{-1})}KF^{-1}-\frac{1-b}{(q^{-2}-1)(q-q^{-1})}K^{-1}F^{-1})Fv\\
=&\Big(a+\lambda\frac{1-b^{-1}}{(q^2-1)(q-q^{-1})}-\lambda^{-1}\frac{1-b}{(q^{-2}-1)(q-q^{-1})}\Big)v,
\end{align*}
we can choose $b\in\C^*$ such that $\Theta_b^{(F)}(E)Fv=0$.

(ii) Let $L_\lambda$ be a weight space of $L$. Suppose $\dim L_\lambda=m$. Since both $X$ and $Y$ act bijectively on $L$, $XY$ is a bijective operator on $L_{\lambda}$ and hence there exists $v\in L_\lambda$ such that $XYv=a_1v$ for some $a_1\in\C^*$. Suppose $C$ acts like $c\in\C^*$ on $L$. If we cannot find $a_1\neq\frac{c}{q-1}$, then we can find a basis $\{v_1,\cdots,v_m\}$ for $L_\lambda$ such that
\begin{align*}
XYv_1&=\frac{c}{q-1}v_1,\\
XYv_i&=\frac{c}{q-1}v_i+\epsilon_iv_{i-1}, \, 2\leq i\leq m,
\end{align*}
where $\epsilon_i\in\{0,1\}, 2\leq i\leq m$.

Because $X$ acts on $L$ bijectively, so $\{X^kv_i|i=1,\cdots,m\}$ is a basis for $L_{\lambda q^k}$ for any $k\in\Z_+$. Similarly, $\{Ev_i|1\leq i\leq m\}$ is a basis for $L_{\lambda q^2}$. Suppose that $Ev_i=\sum\limits_{j=1}^ma_{ij}X^2v_j$, then
\begin{align*}
XYEv_i&=XY(\sum\limits_{j=1}^ma_{ij}X^2v_j)\\
&=q^{-2}\sum\limits_{j=1}^ma_{ij}X^2XYv_j+\frac{1-q^{-2}}{q-1}c\sum\limits_{j=1}^ma_{ij}X^2v_j\\
&=\frac{q^{-2}c}{q-1}Ev_i+\frac{1-q^{-2}}{q-1}c\sum\limits_{j=1}^ma_{ij}X^2v_j+q^{-2}\sum\limits_{j=2}^ma_{ij}\epsilon_jX^2v_{j-1}\\
&=\frac{c}{q-1}Ev_i+q^{-2}\sum\limits_{j=2}^ma_{ij}\epsilon_jX^2v_{j-1}.
\end{align*}
On the other hand, we have
\[
XYEv_i=qX(EY-X)v_i=\left\{\begin{array}{ll}
\frac{c}{q-1}Ev_1-qX^2v_1, &\text{ if } i=1;\\
\frac{c}{q-1}Ev_i+\epsilon_iEv_{i-1}-qX^2v_i, &\text{ if } i\geq 2.
\end{array}\right.\]
Hence, we have
\begin{align*}
\sum\limits_{j=2}^ma_{1j}\epsilon_jX^2v_{j-1}&=-q^3X^2v_1,\\
\sum\limits_{j=2}^ma_{ij}\epsilon_jX^2v_{j-1}&=q^2\epsilon_i \sum\limits_{j=1}^ma_{i-1,j}X^2v_j-q^3X^2v_i,\,\ 2\leq i\leq m.
\end{align*}
So
\begin{align*}
\epsilon_2a_{12}&=-q^3,\\
\epsilon_{i+1}a_{i,i+1}&=q^2\epsilon_ia_{i-1,i}-q^3,\,\ 2\leq i\leq m-1,\\
0&=q^2\epsilon_ma_{m-1,m}-q^3.
\end{align*}
Therefore, we get
\[
0=q^2\epsilon_ma_{m-1,m}-q^3=-q^3\frac{1-q^{2m}}{1-q^2},
\]
which implies that $q^{2m}=1$, this contradicts with our assumption that $q$ is not a root of unity.
%However, the equality cannot hold when $i=m$.

% $XY$ and $YX$ act on $L$ by $\frac{c}{q-1}$. However, from
%\[
%\frac{c}{q-1}Ev=EXYv=qX^2v+XYEv=qX^2v+\frac{c}{q-1}Ev,
%\]
%we have $X^2v=0$, which is impossible.
 So, we can find $a_1$ which is not $\frac{c}{q-1}$. Take $b=\big(1-(q-1)c^{-1}a_1\big)^{-1}$, then
\[
\Theta_b^{(Y)}(X)Yv=0.\qedhere
\]
\end{proof}

\begin{proposition}\label{hwsub}
Let $V$ be a uniformly bounded weight $\S_q$-module with $\mathrm{supp}(V)\subseteq\lambda q^{\Z}$ for some $\lambda\in\C^*$.
If there is some $0\neq v\in V$ such that $Ev=0$ or $Xv=0$, then $V$ has a simple highest weight submodule.
\end{proposition}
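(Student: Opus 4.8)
The plan is to build a highest weight vector inside $V$ and then take the submodule it generates. Start from the given nonzero $v$ with $Ev = 0$ or $Xv = 0$. Since $V$ is a weight module, we may assume $v \in V_\mu$ for some $\mu \in \lambda q^{\Z}$. In the first case ($Ev = 0$), I want to produce a vector also killed by $X$; in the second case ($Xv = 0$), I want one also killed by $E$. The bridge is the relation $EX = qXE$: if $Xv = 0$ then $X(Ev) = q^{-1}E(Xv) = 0$, so the whole chain $E^i v$ lies in $\ker X$, and conversely $E(X^i v) = q^i X^i (Ev)$ shows that if $Ev = 0$ then the chain $X^i v$ lies in $\ker E$. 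Thus in either case we obtain a nonzero vector in $\ker E \cap \ker X$, provided the relevant chain does not die — which is where uniform boundedness enters.

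\textbf{Key steps.} First, reduce to the case $Ev = Xv = 0$ as follows. Suppose $Xv = 0$ but $Ev \neq 0$. The weights of $E^i v$ are $\mu q^{2i}$, all distinct, so the vectors $E^i v$ are linearly independent as long as they are nonzero; since $V$ is uniformly bounded they cannot all be nonzero only if some $E^N v = 0$ with $E^{N-1}v \neq 0$ — but $E$ acting locally nilpotently on the chain does not by itself force a bound. Instead, the clean argument: among $\{v, Ev, E^2v, \dots\}$, if infinitely many are nonzero they span an infinite-dimensional space, but $\dim V_{\mu q^{2i}}$ is bounded; since they sit in distinct weight spaces this is automatically fine — so I must argue that $E$ \emph{is} locally nilpotent on the cyclic module generated by $v$. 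Actually the cleaner route: take $v$ with $Xv = 0$; then $E^i v \in \ker X$ for all $i$; if $E$ is not locally nilpotent on $\langle v\rangle$ we still only need one weight vector killed by both, so instead consider that $\ker X$ is $K$-stable and within each fixed weight space $V_\nu \cap \ker X$ the operator $E$ maps $V_\nu \cap \ker X \to V_{\nu q^2}\cap \ker X$; iterating within the finitely many weight spaces along the $q^2$-chain and using finite-dimensionality of each, some power of $E$ stabilizes to give a common kernel vector. Symmetrically, if $Ev = 0$: $X^i v \in \ker E$, weights $\mu q^i$, and the same boundedness/finite-dimensionality argument on $\ker E$ (which is $K$-stable, $X: V_\nu \cap \ker E \to V_{\nu q}\cap \ker E$) yields a nonzero vector in $\ker E \cap \ker X$.

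\textbf{Finishing.} Once we have $0 \neq w \in V$ with $Ew = Xw = 0$ and $Kw = \nu w$, consider the submodule $W = \S_q w$. Using the PBW basis of $\S_q$ (ordered so that $E$ and $X$ are applied last), $W$ is spanned by vectors $Y^a F^b K^c w$ with weights $\nu q^{-a-2b}$; in particular $W$ is a weight module whose support is bounded above by $\nu$ in the $q^{\Z}$-ordering, and $W_\nu$ is finite-dimensional and nonzero. Among all nonzero submodules of $W$, or rather by a standard argument, $W$ has an irreducible quotient, but we want a \emph{sub}module; so instead take a nonzero submodule $W'$ of $W$ of minimal "top weight" — precisely, pick any $0 \neq w' \in W'$ which is a weight vector of maximal weight in $W'$ (exists since support is bounded above), then automatically $Ew' = 0$ and $Xw' = 0$ (as these raise weight), and moreover $\S_q w' = W'$ forces... actually minimality: choose $W'$ nonzero submodule with $\dim W'_\nu$ minimal among nonzero submodules, then $W'$ is simple. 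I will argue simplicity: any nonzero submodule $W''\subseteq W'$ contains a highest weight vector $w''$; the submodule it generates has top weight space inside $W''_{\mathrm{wt}(w'')}$, and a dimension-count along the $q^{\Z}$ chain using uniform boundedness shows $W''$ must equal $W'$.

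\textbf{Main obstacle.} The delicate point is promoting "there is a nonzero vector killed by $E$ and $X$" to "there is a \emph{simple} highest weight submodule": a priori the submodule generated by a highest weight vector need not be simple, and minimal submodules need not exist in infinite-length modules. The resolution uses uniform boundedness crucially: the top weight space of any nonzero submodule injects (is a subspace of) the $m$-dimensional space $V_\nu$, so we can minimize that dimension; combined with Theorem~\ref{wdim}-type dimension constancy (or a direct argument that along the $q^{\Z}$-chain dimensions of a highest weight module are eventually constant) this forces the minimal one to be simple. Verifying that a highest weight $\S_q$-module with uniformly bounded weight spaces has a well-behaved structure (so that the dimension-minimization argument closes) is the part requiring the most care, and it is where the explicit description of Verma modules from Theorem~\ref{simplehw} will be invoked.
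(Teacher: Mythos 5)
Your proposal takes a genuinely different route from the paper, and both of the places you flag as "requiring care" are in fact genuine gaps that are not closed.

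The paper's proof is short and sidesteps both difficulties. It sets, for the relevant $u\in\{E,X\}$,
\[
W_u=\{w\in V \mid u^nw=0\ \text{for some }n\},
\]
which is nonzero by hypothesis and is an $\S_q$-submodule by the computations in the proof of Lemma~\ref{lonil} (those computations show $u$ acts nilpotently on any PBW monomial applied to a vector in $\ker u$, and do not use simplicity of $V$). Because $V$ is uniformly bounded with support in a single coset $\lambda q^{\Z}$, its restriction to $U_q(\mathfrak{sl}_2)$ has finite length, hence $V$ has finite length over $\S_q$; therefore $W_u$ contains a simple submodule $L$, on which $u$ acts locally nilpotently, and Lemma~\ref{inj} forces $L$ to be a highest weight module.

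Your first gap is the reduction to a vector killed by both $E$ and $X$. You observe correctly that the vectors $E^i v$ (when $Xv=0$) sit in distinct weight spaces, so uniform boundedness does not make the chain terminate. The fallback you then propose --- ``iterating within the finitely many weight spaces along the $q^2$-chain'' --- does not work: the support can be all of $\lambda q^{\Z}$, so there are infinitely many weight spaces on the chain $\nu q^2,\nu q^4,\dots$, and $E$ can perfectly well be injective on $\bigoplus_{i\geq 0}(V_{\nu q^{2i}}\cap\ker X)$ without ever producing a vector in $\ker E\cap\ker X$. Nothing in finite-dimensionality of the individual weight spaces prevents this. This step, as written, would fail; the paper avoids needing it because it never tries to produce a vector simultaneously killed by $E$ and $X$ --- it only needs a simple submodule of $W_u$, and then hands the rest to Lemma~\ref{inj}.

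Your second gap is the extraction of a simple submodule. You correctly note that ``minimal submodules need not exist in infinite-length modules,'' but the proposal never actually establishes that $V$ (or even $\S_q w$) has finite length. Minimizing $\dim W'_\nu$ over nonzero submodules $W'$ also does not work as stated: a nonzero submodule of $\S_q w$ need not meet the top weight space $W_\nu$ at all, so the minimum is $0$ and the argument grinds to a halt. The missing ingredient is exactly the finite-length fact, which the paper invokes directly; once you have it, you can take a simple submodule of $W_u$ and appeal to Lemma~\ref{inj}, which already handles both cases $u=E$ and $u=X$.

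In short: the intent is reasonable, but both reductions are left open, and both are precisely the points where the paper's choice of working with the locally-nilpotent-vectors submodule $W_u$ (rather than $\ker u$, which is not a submodule) together with the finite-length observation makes the argument close.
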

\begin{proof}
By assumption, for $u=E,X$,
\[
W_u:=\{v\in V|u^nv=0\mbox{ for some } n\in\N\}\neq0.
\]
By Lemma \ref{lonil}, $W_u$ is a submodule. Since $V$ has finite length, any simple submodule of $W_u$ is a highest weight module.
\end{proof}

%\begin{proposition}
%Let $L\in\cN$ with zero central charge. Then $XL=YL=0$.
%\end{proposition}

Now we are ready to prove our main result.
\begin{theorem}

\begin{enumerate}[(i)]
%\item Suppose $L$ has zero central charge. Then $XL=YL=0$, and $L$ is a dense $U_q(\mathfrak{sl}_2)$-module.
%\item Suppose $L$ has nonzero central charge.
\item Let $L\in\cN$. Then there exists $b\in\C^*,z\in\C^*$ and $\lambda$ with $\lambda^2=q^{2d-3}\in q^{-3+2\N}$, such that $L\cong B_b^{(Y)}(N(\lambda,z))$. In particular, $L$ has a basis $\{v_{k,l}|k,l\in\Z,0\leq l\leq d-1\}$, on which the $\S_q$-action is given by
\begin{align*}
K.v_{k,l}=&\lambda b^{-1}q^{-k-2l}v_{k,l}, C.v_{k,l}=zv_{k,l},Y.v_{k,l}=v_{k+1,l}, \\
F.v_{k,l}=&\left\{\begin{array}{ll}v_{k,l+1}, & l<d-1,\\
-\sum\limits_{i=0}^{d-1}\Big(\frac{-q^{d-1}}{\lambda z(q+1)}\Big)^{d-i}\binom{d}{i}_qv_{k+2d-2i,i}, & l=d-1,
\end{array}\right.,\\
X.v_{k,l}=&-z\frac{bq^k-1}{q-1}v_{k-1,l}-\lambda^{-1}bq^{k+l-1}[l]_qv_{k+1,l-1},\\
E.v_{k,l}=&\frac{\lambda b^{-1}q^{1-k-l}-\lambda^{-1}bq^{k+l-1}}{q-q^{-1}}[l]_qv_{k,l-1} -\frac{q+q^2-b^{-1}q^{2-k}-bq^{k+1}}{(1-q^2)(q-1)}zv_{k-2,l}.
\end{align*}
\item Let $z\in\C^*$ and $\lambda^2=q^{2d-3}\in q^{-3+2\N}$. Then $B_b^{(Y)}(N(\lambda,z))\cong B_{b'}^{(Y)}(N(\lambda',z'))$ if and only if $\lambda=\lambda',z=z'$ and $b^{-1}b'\in q^\Z$.
\end{enumerate}
\end{theorem}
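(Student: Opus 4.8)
The plan for (i) is: first show that $C$ acts on $L$ by a nonzero scalar; then twist $L$ into a simple highest weight module, recognize that module as an $N(\lambda,z)$, untwist, and read off the displayed action by a direct computation. The plan for (ii) is to compare supports, the scalar by which $C$ acts, and the effect of the inner automorphism $\Theta^{(Y)}_{q^n}$. For the nonzero central charge I argue as follows. By Corollary \ref{bij}, $F$ acts bijectively on $L$, so $B_b^{(F)}(L)$ is, as an $\S_q$-module, just $L$ with the action twisted by $\Theta_b^{(F)}$; by Theorem \ref{wdim} it is a uniformly bounded weight module with support in a single coset $b^{-1}\lambda q^{\Z}$. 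Choosing $b$ as in Lemma \ref{hwv}(i) gives a nonzero vector killed by $E$, so Proposition \ref{hwsub} produces a simple highest weight submodule $P\subseteq B_b^{(F)}(L)$. Since $B_b^{(F)}(L)$ is simple already as an $\S_q^{(F)}$-module, $P$ generates it over $\S_q^{(F)}$, and applying $B_{b^{-1}}^{(F)}$ together with $\Theta_{b^{-1}}^{(F)}\circ\Theta_b^{(F)}=\mathrm{id}$ yields $L\cong B_{b^{-1}}^{(F)}(P)$. Were $CL=0$, then $P$ would have zero central charge, hence $XP=YP=0$ by Theorem \ref{simplehw}(i); since $Y$ commutes with $F^{\pm 1}$ and $YK^{-1}=qK^{-1}Y$, the explicit formulas for $\Theta_{b^{-1}}^{(F)}(X)$ and $\Theta_{b^{-1}}^{(F)}(Y)$ would force $XL=YL=0$, contradicting $L\in\cN$. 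So $C$ acts by some $z\in\C^*$.

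Now $Y$ also acts bijectively on $L$ (Corollary \ref{bij}), so $B_{b'}^{(Y)}(L)$ is again a twist of $L$; choosing $b'$ as in Lemma \ref{hwv}(ii) and repeating the previous argument with $Y$ in place of $F$ gives a simple highest weight module $P'$ with $L\cong B_{b'^{-1}}^{(Y)}(P')$ and central charge $z\neq 0$. By Theorem \ref{simplehw}, $P'$ is $M(\mu,z)$ or $N(\mu,z)$. The first case is impossible: the localization of $M(\mu,z)$ at $Y$ has infinite-dimensional weight spaces (for each $n$ there are infinitely many $(k,l)\in\Z\times\Z_+$ with $k+2l=n$), so $B_{b'^{-1}}^{(Y)}(M(\mu,z))$ could not have the finite-dimensional weight spaces of $L$. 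Hence $P'\cong N(\lambda,z)$ with $\lambda^2=q^{2d-3}\in q^{-3+2\N}$, and with $b:=b'^{-1}$ we get $L\cong B_b^{(Y)}(N(\lambda,z))$. The displayed action follows by applying $\Theta_b^{(Y)}$ to the formulas of Theorem \ref{simplehw}(ii) on the localized module, with basis $\{v_{k,l}\mid k\in\Z,\ 0\le l\le d-1\}$; for instance the coefficient of $v_{k-1,l}$ in $\Theta_b^{(Y)}(X)v_{k,l}$ is $-bz\tfrac{q^k-1}{q-1}-z\tfrac{b-1}{q-1}=-z\tfrac{bq^k-1}{q-1}$, and the $E$-coefficients are obtained the same way.

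For (ii), the ``if'' direction rests on the composition law $B_x^{(Y)}\circ B_y^{(Y)}\cong B_{xy}^{(Y)}$ and the fact --- essentially checked while proving that the $\Theta^{(Y)}_b$ are automorphisms --- that $\Theta^{(Y)}_{q^n}$ is conjugation by $Y^{-n}$, an inner automorphism of $\S_q^{(Y)}$; hence $B_{q^n}^{(Y)}(N(\lambda,z))$ is just the localization of $N(\lambda,z)$ at $Y$ with its original $\S_q$-action, so $B_{bq^n}^{(Y)}(N(\lambda,z))\cong B_b^{(Y)}(N(\lambda,z))$. For ``only if'', an isomorphism forces $z=z'$ by comparing the scalar action of $C$, and applying $B_{b^{-1}}^{(Y)}$ produces an isomorphism between the localization of $N(\lambda,z)$ at $Y$ and $B_{b^{-1}b'}^{(Y)}(N(\lambda',z'))$. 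The former is not simple --- it contains the highest weight submodule $N(\lambda,z)$ --- while $B_c^{(Y)}(N(\lambda',z'))$ is simple for $c\notin q^\Z$ (this is part (i) run backwards, or one checks directly that $\Theta_c^{(Y)}(E)$ and $\Theta_c^{(Y)}(X)$ have no common kernel vector on the localized module when $c\notin q^\Z$), so $b^{-1}b'\in q^\Z$. Then the localizations of $N(\lambda,z)$ and $N(\lambda',z')$ at $Y$ are isomorphic, and comparing their unique simple highest weight submodules --- the highest weight vectors of the localization of $N(\lambda,z)$ are the scalar multiples of $v_{0,0}$, because any such submodule has a maximal-weight vector and cannot be a proper submodule of the simple module $N(\lambda,z)$ --- gives $N(\lambda,z)\cong N(\lambda',z')$, whence $\lambda=\lambda'$.

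The step I expect to be the main obstacle is the first one, establishing $CL\neq 0$, and more generally the bookkeeping with the twisting functors: one must use that $B_b^{(u)}(L)$ is a ``localize, then twist'' which coincides with a plain twist precisely when $u$ acts bijectively on $L$, since this is exactly what lets a highest weight submodule of $B_{b'}^{(Y)}(L)$ be pulled back to all of $L$, and one must be careful that a highest weight submodule of $B_b^{(u)}(L)$ need not be all of it. The explicit-action computation in (i) and the elimination of the ``$\lambda$ versus $-\lambda$'' ambiguity in (ii) are routine once the formulas of Theorem \ref{simplehw} and of $\Theta_b^{(Y)}$ are in hand.
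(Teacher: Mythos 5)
Your proof follows the same overall route as the paper's: twist by $\Theta^{(F)}$ to force a highest weight vector and rule out $CL=0$, twist by $\Theta^{(Y)}$ to land on an $N(\lambda,z)$, compute the action, and for (ii) compare supports, the central character, and the effect of the integer twists. The key lemmas invoked (\ref{hwv}, \ref{hwsub}, \ref{wdim}, the explicit $\Theta^{(Y)}_b$ formulas) are the same.

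However, you handle one step more carefully than the paper does, and it is a genuine gap there. The paper deduces $L\cong B_b^{(F)}(N)$ by invoking Proposition~\ref{twist} to say $B_{b^{-1}}^{(F)}(L)$ is a \emph{simple} $\S_q$-module and hence equal to its highest weight submodule $N$. But $B_{b^{-1}}^{(F)}(L)$ has the same underlying space as $L$ (since $F$ acts bijectively on $L$), so its support is a full $q^\Z$-coset, unbounded above --- it cannot literally \emph{be} a highest weight module, and indeed its highest weight submodule is proper. (Correspondingly, the argument in the proof of Proposition~\ref{twist} breaks down for such $W$: $B_{b^{-1}}^{(F)}(W)$ need not land inside $L$ but in the localization $L^{(F)}$ once $F$ fails to be bijective on $W$, which it does on a highest weight submodule.) You sidestep this by noting only that $B_b^{(F)}(L)$ is simple as an $\S_q^{(F)}$-module, so the localization $\mathrm{Ind}\,P=\S_q^{(F)}\otimes_{\S_q}P$ of the highest weight submodule $P$ already fills out all of $B_b^{(F)}(L)$, and untwisting gives $L\cong B_{b^{-1}}^{(F)}(P)$. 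That is the correct argument, and you even flag the pitfall explicitly in your final paragraph. Your treatment of (ii) is likewise a bit more explicit than the paper's one-line ``if $\lambda=-\lambda'$ then they are not isomorphic'': passing to the localization of $N(\lambda,z)$ at $Y$ and using that every nonzero submodule of $N^{(Y)}$ meets (hence contains) $N$ pins down $\lambda=\lambda'$ cleanly. So: same strategy, but your write-up repairs a real soft spot in the published argument.
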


\begin{proof}
By Lemma \ref{hwv}, there exists $b^{-1}\in\C^*$ and $0\neq v\in B_{b^{-1}}^{(F)}(L)$ such that $Ev=0$. Then by Proposition \ref{hwsub}, $B_{b^{-1}}^{(F)}(L)$ contains a simple highest weight submodule $N$. However, $B_{b^{-1}}^{(F)}(L)$ is simple by Proposition \ref{twist}. So $L\cong B_b^{(F)}(N)$.

Following from Theorem \ref{simplehw}, one of the following holds:
\begin{enumerate}[(a)]
\item $XN=YN=0$ and $z=0$;
\item $N\cong M(\lambda,z)$ for some $\lambda$ with $\lambda^2\notin q^{-3+2\N}$ and $z\in\C^*$;
\item $N\cong N(\lambda,z)$ for some $\lambda$ with $\lambda^2\in q^{-3+2\N}$ and $z\in\C^*$.
\end{enumerate}

So, if $L$ has zero central charge, we have $XL=\Theta_b^{(F)}(X)N=0$ and $YL=\Theta_b^{(F)}(Y)N=0$ which is impossible.So $L$ has nonzero central charge. Then
using the same argument as above, we know that $L\cong B_b^{(Y)}(N)$ for some $b$ and some highest weight module $N$, where $N$ can only be the last two cases.
However, for case (b), the weight spaces of $B_b^{(Y)}(N)$ are unbounded. So, $N$ can be only case (c). From the basis of $B_b^{(Y)}(N(\lambda,z))$, we know that it is in $\cN$. The actions follows from direct calculations.

For (ii), suppose $B_b^{(Y)}(N(\lambda,z))\cong B_{b'}^{(Y)}(N(\lambda',z'))$. Then clearly they must have the same central charge must equal, that is $z=z'$. Also, the isomorphism implies the same dimension of weight spaces, which means $\lambda^2=\lambda'^2$. However, if $\lambda=-\lambda'$, then they are not isomorphic. From
\[
\lambda b^{-1}q^\Z=\mathrm{supp}(B_b^{(Y)}(N(\lambda,z)))=\mathrm{supp}(B_{b'}^{(Y)}(N(\lambda',z')))=\lambda' b'^{-1}q^\Z,
\]
we know that $b^{-1}b'\in q^\Z$.

Conversely, whenever $b=q^t\in q^\Z$, it is easy to check that
\[
B_1^{(Y)}(N(\lambda,z))\rightarrow B_b^{(Y)}(N(\lambda,z)); v_{k+t,l}\mapsto v_{k,l}
\]
is an isomorphism.
\end{proof}

\begin{center}
\bf Acknowledgments
\end{center}

\noindent The research presented in this paper was carried out during the visit of Y. Cai to Henan University. Y. Cai is partially supported by the China Postdoctoral Science Foundation (Grant 2016M600140)
 Y. Cheng. is partially supported by NSF of China (Grant
11047030) and the Science and Technology
Program of Henan Province (152300410061).
 G. L. is partially supported by NSF of China (Grant
11301143) and  the school fund of Henan University (yqpy20140044).

\vspace{1mm}
\noindent Y. Cai: Academy of Mathematics and Systems Science, Chinese Academy of
Sciences, Beijing, 100190, P.R. China. Email: yatsai@mail.ustc.edu.cn

\vspace{0.2cm}
\noindent Y. Cheng: School
of Mathematics and Statistics, Henan University, Kaifeng 475004, P.R. China. Email:
yscheng@henu.edu.cn

\vspace{0.2cm}
\noindent G. Liu: School
of Mathematics and Statistics, Henan University, Kaifeng 475004, P.R. China. Email:
liugenqiang@amss.ac.cn

\end{document}